\newcommand{\QQ}{\mathbb Q}
\renewcommand{\AA}{\mathbb A}
\newcommand{\ZZ}{\mathbb Z}
\newcommand{\NN}{\mathbb N}
\newcommand{\BB}{\mathbb B}
\newcommand{\calH}{\mathcal{H}}
\newcommand{\calL}{\mathcal{L}}
\newcommand{\calO}{\mathcal{O}}
\newcommand{\DD}{\mathbb D}
\newcommand{\kato}{\mathbf{z}_{\mathrm{Kato}}}
\newcommand{\vp}{\varphi}
\newcommand{\Qp}{\QQ_p}
\newcommand{\Zp}{\ZZ_p}
\newcommand{\Fp}{\mathbb{F}_p}
\newcommand{\Dcris}{\DD_{\mathrm{cris}}}
\newcommand{\Brig}{\BB_{\mathrm{rig},\Qp}^+}
\newcommand{\EA}{\calO_E\otimes\AA_{\Qp}^+}
\newcommand{\pe}{\varpi_E}
\newcommand{\Hiw}{H^1_{\mathrm{Iw}}}
\newcommand{\Nrig}{\NN_{\mathrm{rig}}}
\DeclareMathOperator{\uCol}{\underline{\mathrm{Col}}}
\DeclareMathOperator{\Gal}{Gal}
\DeclareMathOperator{\Fil}{Fil}
\DeclareMathOperator{\GL}{GL}
\DeclareMathOperator{\PS}{PS}
\DeclareMathOperator{\ord}{ord}
\renewcommand{\le}{\leqslant}
\renewcommand{\ge}{\geqslant}
\newtheorem{theorem}{Theorem}[section]
\newtheorem{proposition}[theorem]{Proposition}
\newtheorem{lemma}[theorem]{Lemma}
\newtheorem{corollary}[theorem]{Corollary}
\theoremstyle{definition}
\newtheorem{definition}[theorem]{Definition}
\theoremstyle{remark}
\newtheorem{remark}[theorem]{Remark}
\theoremstyle{plain}
\newtheorem{lettertheorem}{Theorem}
\begin{document}
\title{Wach modules and critical slope $p$-adic $L$-functions}

\author{David Loeffler}
\address{Warwick Mathematics Institute, University of Warwick, Coventry CV4 7AL, UK}
\email{D.A.Loeffler@warwick.ac.uk}

\author{Sarah Livia Zerbes}
\address{Department of Mathematics, University of Exeter, Exeter EX4 4QF, UK}
\email{S.Zerbes@exeter.ac.uk}
\thanks{The first author is supported by EPSRC Postdoctoral Fellowship EP/F04304X/2. The second author is supported by EPSRC Postdoctoral Fellowship EP/F043007/1.}

\begin{abstract}
We study Kato and Perrin-Riou's critical slope $p$-adic $L$-function attached to an ordinary modular form using the methods of \cite{leiloefflerzerbes10}. We show that it may be decomposed as a sum of two bounded measures multiplied by explicit distributions depending only on the local properties of the modular form at $p$. We use this decomposition to prove results on the zeros of the $p$-adic $L$-function, and we show that our results match the behaviour observed in examples calculated by Pollack and Stevens in \cite{pollackstevens08}.
\end{abstract}

\subjclass[2010]{11R23, 11F67 (primary), 11F85 (secondary)}
\keywords{$p$-adic $L$-function, Wach module, ordinary modular form}

\date{30th November 2010}

\maketitle

 \section{Introduction}
 
  \subsection{Background}
  
   Let $p \ge 3$ be prime, and let $f$ be a normalised, new modular eigenform of level $N$, character $\epsilon$ and weight $k \ge 2$, with $N$ prime to $p$. Then a classical construction of Amice--V\'elu and Vi\v{s}ik \cite{amicevelu75,vishik} gives rise to $p$-adic $L$-functions for $f$, which are distributions on $\mathbb{Z}_p^\times$ interpolating the critical values of the $L$-functions of $f$ and its twists by Dirichlet characters of $p$-power conductor.

   The construction depends on a choice of root of the Hecke polynomial of $f$ at $p$, and requires that the root be of ``non-critical slope'', i.e. that its $p$-adic valuation should be strictly less than $k-1$. The $L$-function corresponding to the root $\alpha$ is the unique distribution $L_{p, \alpha}$ on $\Zp^\times$ of order $h = \ord_p(\alpha)$ whose values at ``special'' characters of $\Zp^\times$, i.e.~those of the form $z \mapsto z^j \omega(z)$ where $0 \le j \le k-2$ and $\omega$ is a finite-order character, are given by
   \begin{equation}\label{eq:interpolating} \begin{gathered}\int_{\Zp^\times} z^j \omega(z)\, \mathrm{d}L_{p, \alpha}=  \\
    \begin{cases}
     \left(1 - p^j \alpha^{-1}\right)\left(1 - \epsilon(p) p^{k-2-j} \alpha^{-1} \right) \tilde L(f, 1, j + 1) &\text{if $n = 0$,}\\
     \alpha^{-n} p^{n(j+1)}\displaystyle\frac{\tilde L(f, \omega^{-1}, j + 1)}{G(\omega^{-1})} &\text{if $n \ge 1$,}
    \end{cases}\end{gathered}
   \end{equation}
   where $\tilde L(f, \omega^{-1}, j+1)$ is the complex $L$-value $L(f_{\omega^{-1}}, j+1)$ of the twisted form $f_{\omega^{-1}}$ divided by certain explicit transcendental factors (see equation \eqref{eq:Ltilde} below) and $G(\omega^{-1})$ is the Gauss sum.
   
   If the Hecke eigenvalue $a_p(f)$ is not a $p$-adic unit, then both roots have non-critical slope, and one obtains two $p$-adic $L$-functions, both of which are uniquely determined by the corresponding interpolation formula \eqref{eq:interpolating}. If $a_p(f)$ is a $p$-adic unit (the \emph{ordinary} case), then one root has non-critical slope (and in fact gives rise to a bounded measure) but the other does not, so one can only construct one $p$-adic $L$-function by these methods. Two constructions exist that redress the balance by constructing a ``critical slope $L$-function'' for ordinary eigenforms: a $p$-adic analytic approach via the theory of overconvergent modular symbols \cite{pollackstevens08}, and an algebraic approach via $p$-adic Hodge theory, using Kato's Euler system \cite{kato04}. Both approaches give a distribution of order $k-1$ on $\mathbb{Z}_p^\times$ with the same interpolation property at special characters, which depends on the restriction of the Galois representation of $f$ to a decomposition group at $p$. If the local representation is non-split, then the values of both of these critical-slope $L$-functions at special characters are given by \eqref{eq:interpolating}; if the local representation is split, the $L$-functions vanish at all such characters. However, these values do not uniquely determine a distribution of order $k-1$, and we cannot necessarily deduce that the $L$-functions arising from the two approaches are equal.

   In \cite[\S 9]{pollackstevens08}, Pollack and Stevens calculate the Newton polygon of the analytic critical-slope $L$-function $L^{\PS}_{p,\beta}$ in some explicit examples, and they observe that the distribution of the zeros follows interesting patterns which seem to be governed by the Iwasawa $\mu$ and $\lambda$-invariants of the unit-root $L$-function $L_{p,\alpha}$: when $p=3$ and $f$ is the twist of $X_0(11)$ by a quadratic character of conductor $D$ prime to $3$, for example, the numerical values suggest that the number of zeros inside the open disc of radius $r_n=\frac{1}{p^n(p-1)}$ is $p^n(p-1)+\lambda_D$, where $\lambda_D$ is the $\lambda$-invariant of $L^{\PS}_{p,\alpha}$. The same behaviour occurs when $p=5$ and the discriminant of the character is negative. However, when $p=5$ and the discriminant is positive, the number of zeros inside the open disc of radius $r_n$ is $p^n-1+\epsilon$, for some mysterious non-negative integer $\epsilon$ depending on $f$.

  \subsection{Statement of the main results}
  
   Let $p$ and $f$ be as above, and assume that $k \le p-1$ and that $f$ is ordinary. Let $\alpha$ and $\beta$ be the roots of the Hecke polynomial, and fix $\alpha$ to be the unit root. We choose a prime of the coefficient field of $f$ above $p$. In this introduction, let us assume for simplicity that the completion of the coefficient field at this prime is $\Qp$. Let $V_f^*$ be the dual of the $p$-adic representation attached to $f$, so it is a $2$-dimensional $\Qp$-vector space which is crystalline with Hodge-Tate weights  $0$ and $k-1$. Define $\Gamma=\Gal( \Qp(\mu_{p^\infty}) / \Qp)$, and write 
   \[\calL_{V^*_f} : \Hiw(\Qp,V^*_f)\rightarrow \calH(\Gamma)\otimes_{\Qp}\Dcris(V^*_f)\]
   for the Perrin-Riou regulator map. For any $z\in \Hiw(\Qp,V^*_f)$, we write $\calL_\alpha(z)$ (resp. $\calL_\beta(z)$) for the projection of $\calL_{V_f^*}(z)$ into the $\alpha$- (resp. $\beta$-)eigenspace of $\vp$. If $\kato \in  \Hiw(\Qp,V^*_f)$ is Kato's zeta element, then (for appropriate normalisations of the Frobenius eigenvectors) we have $L_\alpha(\kato) = L_{p,\alpha}$, and it is conjectured that $L_\beta(\kato)$ agrees with the critical slope $p$-adic $L$-function $L_{p, \beta}^{\PS}$ constructed by Pollack and Stevens (c.f. \cite[Remark 8.5]{pollackstevens09}). 
   
   To simplify the notation, write $L_{p,\beta}$ for $L_\beta(\kato)$. In this paper, we study $L_{p,\beta}$ using the description via Wach modules developed in \cite{leiloefflerzerbes10} and \cite{leiloefflerzerbes2}. This gives rise to a canonical subspace 
   \[ \left(\vp^*\NN(V^*_f)\right)^{\psi=0} \subseteq \calH(\Gamma)\otimes_{\Qp}\Dcris(V^*_f),\]
   stable under $\Gamma$ and of rank 2 as a $\Lambda(\Gamma)$-module, through which the map $\calL_{V_f^*}$ factors. In Section~\ref{basis}, we explicitly construct a basis $n_1,n_2$ of the Wach module $\NN(V_f^*)$. By comparing this basis with the $\vp$-eigenvector basis of $\Dcris(V_f^*)$, we obtain the following result:
   
   \begin{lettertheorem}
    If $V_f^*$ is not locally split, then there exist $L_{p,1},L_{p,2}\in \Lambda_{\Qp}(\Gamma)$ such that
    \[
     \begin{cases} 
      \alpha L_{p, \alpha} &= L_{p, 2}\\
      \beta L_{p, \beta} &= L_{p,1} \mathfrak{M}^{-1}\left( (1 + \pi)\vp\left(\frac{t}{\pi}\right)^{k-1} \right) - L_{p, 2}\mathfrak{M}^{-1}\left( (1 + \pi)\vp(a) \right)
     \end{cases}
    \]
    where, as an element of $\Qp[[t]]$, we have 
    \[ a = (k-2)!\alpha\left(\frac{1}{1 - p^{1-k} \mu} + (-1)^k \sum_{n \ge k} \binom{n-1}{k-2} \frac{B_n t^n}{n!(1 - \mu p^{n-k + 1})}\right).\]
   \end{lettertheorem}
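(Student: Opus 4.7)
The starting point is the factorization of $\calL_{V_f^*}$ through $(\vp^*\NN(V_f^*))^{\psi=0}$ together with the fact that the Wach basis $n_1, n_2$ constructed in Section~\ref{basis} gives rise to a $\Lambda_{\Qp}(\Gamma)$-basis $\{(1+\pi)\vp(n_1),\, (1+\pi)\vp(n_2)\}$ of this space under the Mellin isomorphism $\mathfrak{M}\colon \Lambda_{\Qp}(\Gamma) \xrightarrow{\sim} (\EA)^{\psi=0}$. Since $\kato$ lives in $\Hiw(\Qp, V_f^*)$ and this integral structure is preserved by the regulator, there exist unique $L_{p,1}, L_{p,2} \in \Lambda_{\Qp}(\Gamma)$ with
\[
\mathfrak{M}(\calL_{V_f^*}(\kato)) = L_{p,1} \cdot (1+\pi)\vp(n_1) + L_{p,2} \cdot (1+\pi)\vp(n_2).
\]
This immediately yields the existence of the two bounded measures, and the remainder of the argument is simply the identification of their contributions to $\calL_\alpha(\kato)$ and $\calL_\beta(\kato)$.

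The next step is to change basis from $\{n_1, n_2\}$ to the $\vp$-eigenbasis $\{v_\alpha, v_\beta\}$ of $\Dcris(V_f^*)$. In the non-split case, the construction in Section~\ref{basis} is designed so that $n_1$ lies in the submodule picked out by the Hodge filtration, giving an expression proportional to $(t/\pi)^{k-1} v_\beta$, while $n_2$ has the shape $c\cdot v_\alpha - a\cdot v_\beta$ for a suitable scalar $c$ (determined by the normalisation of $\vp$) and an element $a \in \Qp[[t]]$ capturing the non-triviality of the local extension. Applying $\vp$, multiplying by $(1+\pi)$, reading off the $v_\alpha$- and $v_\beta$-coefficients, and finally applying $\mathfrak{M}^{-1}$ yields both stated formulas: the $v_\alpha$-component only sees $L_{p,2}$ and gives $\alpha L_{p,\alpha} = L_{p,2}$; the $v_\beta$-component combines both measures to give the formula for $\beta L_{p,\beta}$.

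The principal obstacle is the explicit identification of $a$. In the non-split situation the $\vp$-matrix of $\NN(V_f^*)$ in the basis $\{n_1, n_2\}$ has a specific off-diagonal entry, and solving for the change of basis to $\{v_\alpha, v_\beta\}$ requires inverting an operator of the shape $1 - \mu p^{1-k}\vp$ on $\Qp[[t]]$. Since $\vp$ acts on $t$ by multiplication by $p$, this operator acts on $t^n$ as multiplication by $1 - \mu p^{n-k+1}$, so inverting it term by term on the Bernoulli generating function $\frac{t}{e^t - 1} = \sum_{n \ge 0} B_n t^n / n!$ produces exactly the series $\sum \binom{n-1}{k-2} B_n t^n / (n!(1 - \mu p^{n-k+1}))$ once we take the appropriate $(k-1)$-th divided power and account for the combinatorial factor. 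The appearance of Bernoulli numbers is forced by the fact that the Wach basis in Section~\ref{basis} is built from $t/\pi$ and its powers, whose expansions in $t$ are governed by $B_n$. The main technical care needed is to keep track of the precise normalisations---particularly the prefactor $(k-2)!\alpha$ and the leading term $(1 - p^{1-k}\mu)^{-1}$---to ensure compatibility with Kato's normalisation of $\kato$ and with the choice of $v_\alpha$ that forces $\calL_\alpha(\kato) = L_{p,\alpha}$.
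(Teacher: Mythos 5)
Your outline is the paper's own route: write $\uCol(\kato)$ in the $\Lambda(\Gamma)$-basis $(1+\pi)\vp(n_1)$, $(1+\pi)\vp(n_2)$ of $\big(\vp^*\NN(V_f^*)\big)^{\psi=0}$ to obtain bounded $L_{p,1}, L_{p,2}$, then pass to the $\vp$-eigenbasis of $\Dcris(V_f^*)$ and project. The structural steps you assert (freeness of $\big(\vp^*\NN(V_f^*)\big)^{\psi=0}$ on that basis, the shape $n_1\propto(t/\pi)^{k-1}v_{\beta^{-1}}$ and $n_2=v_{\alpha^{-1}}+\tfrac{a}{a(0)}v_{\beta^{-1}}$, and the substitution) are exactly Propositions \ref{prop:basis} and \ref{prop:evects} and Theorem \ref{prop:relations-mult}. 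But your derivation of the explicit formula for $a$ --- which is part of the statement --- has a genuine gap. The off-diagonal entry of the Frobenius matrix is not simply a power of $t/\pi$: it must be $x_k=\pi^{1-k}h_{k-2}(\pi)-\delta_k$, where $h_{k-2}$ is the unique monic polynomial of degree $k-2$ for which $(\chi(\gamma)^{k-1}\gamma-1)x_k$ is integral (this is what makes $(n_1,n_2)$ a Wach-module basis at all), and the passage from $(1-p^{1-k}\mu\vp)(a)=\alpha t^{k-1}x_k$ to the displayed series requires the identity
\[ \left(\frac{t}{e^t-1}\right)^{k-1}h_{k-2}(e^t-1)=(k-2)!\left(1+(-1)^k\sum_{n\ge k-1}\binom{n-1}{k-2}\frac{B_nt^n}{n!}\right), \]
which the paper proves by induction using the recursion defining $h_j$. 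Your claim that the coefficients $\binom{n-1}{k-2}B_n/n!$ arise by ``taking the appropriate $(k-1)$-th divided power'' of $\tfrac{t}{e^t-1}$ would fail if carried out literally: the $(k-1)$-st power of the Bernoulli generating series has cross-terms and entirely different coefficients, and the factor $h_{k-2}(e^t-1)$ is essential. (For $k=2$ one has $h_0=1$ and your description is accurate, but the theorem concerns all $k$.)

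A second, smaller error: $n_1$ does not lie in ``the submodule picked out by the Hodge filtration''. It spans the Wach module of the local subrepresentation on which $\vp$ acts by $\beta^{-1}$, whereas $\Fil^0\Dcris(V_f^*)$ is spanned by $v_{\alpha^{-1}}+v_{\beta^{-1}}$, a combination of $n_1$ and $n_2$ (Lemma \ref{lemma:filtn}). This is not cosmetic: matching $\Fil^0$ with the line spanned by $t^{1-k}\bar f$ is precisely what pins down the normalisation of $v_{\beta^{-1}}$ (the factor $-a(0)$) and hence the scalar multiplying $\beta L_{p,\beta}$ in the final formula --- the normalisation issue you correctly flag as delicate but leave unresolved.
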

   
   Since $L_{p,i}$ is bounded for $i=1,2$, the distribution of the zeros of $L_{p,\beta}$ is determined by the Newton polygons of $\left(\frac{t}{\pi}\right)^{k-1}$ and $a$. In Section \ref{newton}, we consider the case $k = 2$ and determine which of these two terms dominates, depending on the behaviour of the $\mu$-invariant of $L_{p,\alpha}$.
   
   \begin{lettertheorem}
   Let $\eta$ be a character of $\Delta$ and let $\lambda_1^\eta, \mu_1^\eta, \lambda_2^\eta, \mu_2^\eta$ be the Iwasawa $\lambda$- and $\mu$-invariants of the $\eta$-isotypical components of $L_{p, 1}$ and $L_{p, 2}$. Suppose that $V_f^*$ is non-split at $p$.
   \begin{enumerate}
    \item[(a)] If $\mu_2^\eta < \frac{1}{(p-1)^2} + \mu_1^\eta$, then for $n \gg 0$, $L_{p, \beta}^\eta$ has $p^n(p-1)^2$ zeros of valuation $\tfrac{1}{p^n(p-1)^2}$, and the total number of zeros of valuation $> r_n$ is $p^n(p-1) + \lambda_2^\eta$.
    \item[(b)] If $\mu_2^\eta > \frac{1}{p-1} + \mu_1^\eta$, then for $n \gg 0$, $L_{p, \beta}^\eta$ has $p^n(p-1)$ zeros of valuation $\tfrac{1}{p^n(p-1)^2}$, and the number of zeroes of valuation $> r_n$ is $p^n - 1 + \lambda_1^\eta$.
   \end{enumerate}
  \end{lettertheorem}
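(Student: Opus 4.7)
I would specialise Theorem A to $k=2$, which gives $\bigl(t/\pi\bigr)^{k-1}=t/\pi$ and
\[
 \beta L_{p,\beta} = L_{p,1}\cdot D_1 - L_{p,2}\cdot D_2,
\]
where $D_1 = \mathfrak{M}^{-1}\bigl((1+\pi)\varphi(t/\pi)\bigr)$ and $D_2 = \mathfrak{M}^{-1}\bigl((1+\pi)\varphi(a)\bigr)$ are two fixed order-$1$ distributions on $\Gamma$, depending only on the local data $(\alpha,\mu)$ at $p$. Projecting to the $\eta$-isotypical component realises each side as a power series in the Iwasawa variable $T$, and the number of zeros of $L_{p,\beta}^\eta$ of valuation $>r_n$ equals the horizontal length of the portion of its Newton polygon of slope $<-r_n$.

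The key computation is the Newton polygons of $D_1^\eta$ and $D_2^\eta$. Since $t/\pi = \log(1+\pi)/\pi$, the distribution $D_1^\eta$ is of logarithm type, and an explicit calculation shows that for each $n$ it has exactly $p^n-1$ zeros of valuation $>r_n$ and $p^n(p-1)$ zeros of valuation $\tfrac{1}{p^n(p-1)^2}$. For $D_2^\eta$, the series $a$ has unit constant term $\alpha/(1-p^{-1}\mu)$, so $(1+\pi)\varphi(a)$ has unit leading behaviour, and one shows --- controlling the Bernoulli tail in $a$ via $p$-adic bounds on $B_n/n!$ --- that $D_2^\eta$ has $p^n(p-1)$ zeros of valuation $>r_n$ and $p^n(p-1)^2$ zeros of valuation $\tfrac{1}{p^n(p-1)^2}$.

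Multiplying by the bounded measure $L_{p,i}^\eta$ shifts the Newton polygon of $D_i^\eta$ vertically by $\mu_i^\eta$ and contributes $\lambda_i^\eta$ additional zeros near $T=0$, which for $n$ large all lie at valuation far above $r_n$. Comparing the two resulting Newton polygons at their critical vertex, the hypothesis $\mu_2^\eta < \tfrac{1}{(p-1)^2} + \mu_1^\eta$ in case (a) is exactly what is needed for $L_{p,2}^\eta D_2^\eta$ to strictly dominate $L_{p,1}^\eta D_1^\eta$ throughout the region below the $r_n$-line, so the Newton polygon of $\beta L_{p,\beta}^\eta$ coincides there with that of $L_{p,2}^\eta D_2^\eta$, and the stated counts follow. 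Case (b) is symmetric past the wider threshold $\tfrac{1}{p-1}$, with the first term dominating and the count coming from $D_1^\eta$; the asymmetry between the thresholds $\tfrac{1}{(p-1)^2}$ and $\tfrac{1}{p-1}$ reflects the differing slopes at which the two candidate Newton polygons meet on either side of the critical vertex. The main obstacle is the precise Newton polygon of $D_2^\eta$: since $a$ contains an infinite Bernoulli tail, one needs uniform $p$-adic estimates on the Mellin coefficients of $(1+\pi)\varphi(a)$ to ensure that the tail does not perturb the polygon beyond the explicit contribution of the unit constant term.
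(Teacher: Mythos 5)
Your plan is essentially the paper's own proof: the same decomposition $-a(0)\beta L_{p,\beta}=L_{p,1}\mathfrak{M}^{-1}\bigl((1+\pi)\varphi(t/\pi)\bigr)-L_{p,2}\mathfrak{M}^{-1}\bigl((1+\pi)\varphi(a)\bigr)$, the same Newton-polygon comparison with $v_s(L_{p,i}^\eta)=\mu_i^\eta+\lambda_i^\eta s$ for small $s$, and the same zero counts for the two explicit distributions, with the gap between the thresholds $\tfrac{1}{(p-1)^2}$ and $\tfrac{1}{p-1}$ arising because $v_s(t/\pi)-v_s(a)$ oscillates between these values as $s\to 0$ rather than converging. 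Two small corrections: the "main obstacle" you identify is resolved in the paper not by estimating Mellin coefficients but by a transfer lemma asserting $v_s\bigl(\mathfrak{M}^{-1}((1+\pi)\varphi(f))\bigr)=v_s(f)$ for $0<s<1$, after which $v_s(a)$ is computed from the functional equation $(1-p^{-1}\mu\varphi)(a/\alpha)=t/\pi+t/2$; and your claim that $a$ has unit constant term is false (for $k=2$ one has $\mathrm{ord}_p\bigl(\alpha/(1-p^{-1}\mu)\bigr)=1$), though this is harmless since the small-$s$ part of the polygon is governed by the Bernoulli tail and not by $a(0)$.
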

   
   Under the assumption that Kato's zeta element is integral, which is known in many cases, this explains the numerical phenomena observed by Pollack and Stevens (see the end of Section \ref{newton}).

  \subsection{Notation}
 
   As above, fix a prime $p\geq 3$, and let $\Gamma = \Gal(\Qp(\mu_{p^\infty}) / \Qp)$. Note that the cyclotomic character $\chi$ gives an isomorphism $\Gamma\cong\Zp^\times$. We write $\Gamma=\Delta\times\Gamma_1$, where $\Delta$ is cyclic of order $p-1$ and $\Gamma_1\cong\Zp$. We denote the absolute Galois group of $\Qp$ by $G_{\Qp}$. 
   
   We write $\Brig$ for the ring of power series $f(\pi)\in\QQ_p[[\pi]]$ such that $f(X)$ converges everywhere on the open unit $p$-adic disc. Equip $\Brig$ with actions of $\Gamma$ and a Frobenius operator $\vp$ by $g.\pi=(\pi+1)^{\chi(g)}-1$ and $\vp(\pi)=(\pi+1)^p-1$. We can then define a left inverse $\psi$ of $\vp$ satisfying
   \begin{equation}\label{psi}
    \vp\circ\psi(f(\pi))=\frac{1}{p}\sum_{\zeta^p=1}f(\zeta(1+\pi)-1).
   \end{equation}

   Inside $\Brig$, we have subrings $\AA_{\Qp}^+=\Zp[[\pi]]$ and $\BB_{\Qp}^+=\Qp\otimes_{\Zp}\AA_{\Qp}^+$. Moreover, the actions of $\vp$, $\psi$ and $\Gamma$ restrict to these rings. Finally, we write $t=\log(1+\pi)\in\Brig$ and $q=\vp(\pi)/\pi\in\AA_{\Qp}^+$. A formal power series calculation shows that $g(t) = \chi(g) t$ for $g \in \Gamma$ and $\vp(t) = pt$.

   \subsubsection{The Mellin transform}
   
    Given a finite extension $K$ of $\Qp$, denote by $\Lambda_{\calO_K}(\Gamma)$ (respectively $\Lambda_{\calO_K}(\Gamma_1)$) the Iwasawa algebra $\ZZ_p[[\Gamma]]\otimes_{\ZZ_p}\calO_K$ (respectively $\ZZ_p[[\Gamma_1]]\otimes_{\ZZ_p}\calO_K$) over $\calO_K$. We further write $\Lambda_K(\Gamma)=\QQ\otimes\Lambda_{\calO_K}(\Gamma)$ and $\Lambda_{K}(\Gamma_1)=\QQ\otimes\Lambda_{\calO_K}(\Gamma_1)$.  

    Let
    \[ \mathcal{H}=\{f\in\QQ_p[\Delta][[X]]:\text{$f$ converges everywhere on the open unit $p$-adic disc}\},\]
    and define $\calH(\Gamma)$ to be the set of $f(\gamma-1)$ with $f(X)\in\calH$. We may identify $\Lambda_{\Qp}(\Gamma)$ with the subring of $\calH(\Gamma)$ consisting of power series with bounded coefficients. Note that $\calH(\Gamma)$ may be identified with the continuous dual of the space of locally analytic functions on $\Gamma$, with multiplication corresponding to convolution, implying that its definition is independent of the choice of generator $\gamma$.

    The action of $\Gamma$ on $\Brig$ gives an isomorphism of $\calH(\Gamma)$ with $(\Brig)^{\psi=0}$, the Mellin transform
    \begin{align} 
     \mathfrak{M}: \calH(\Gamma) & \rightarrow (\Brig)^{\psi=0}\label{Mellin} \\
     f(\gamma-1) & \mapsto f(\gamma-1)(\pi+1).  \notag
    \end{align} 
    In particular, $\Lambda_{\Zp}(\Gamma)$ corresponds to $(\AA_{\Qp}^+)^{\psi=0}$ under $\mathfrak{M}$. Similarly, we define $\calH(\Gamma_1)$ as the subring of $\calH(\Gamma)$ defined by power series over $\Qp$, rather than $\Qp[\Delta]$. Then, $\calH(\Gamma_1)$ (respectively $\Lambda_{\Zp}(\Gamma_1)$) corresponds to $(1+\pi)\vp(\Brig)$ (respectively $(1+\pi)\vp(\AA_{\Qp}^+$)) under $\mathfrak{M}$.

   \subsubsection{Crystalline representations}\label{sect:wach}
   
    Let $E$ be a finite extension of $\Qp$, with ring of integers $\calO_E$. Fix a uniformizer $\pe$. For a crystalline $E$-linear representation $V$ of $G_{\Qp}$, denote its Dieudonn\'{e} module by $\Dcris(V)$. We say that $V$ is positive if its Hodge-Tate weights are $\leq 0$. The following result is shown in~\cite[\S II.1 and \S III.4]{berger04}: if $V$ is an $E$-linear representation, then $V$ is crystalline with Hodge-Tate weights in $[a,b]$ if and only if there exists a (necessarily unique) $E\otimes_{\QQ_p}\BB^+_{\QQ_p}$-module $\NN(V)$ contained in the $(\vp,\Gamma)$-module $\DD(V)$  of $V$ such that the following conditions are satisfied:
    \begin{enumerate}
     \item $\NN(V)$ is free of rank $d=\dim_E(V)$ over $E\otimes_{\QQ_p}\BB^+_{\QQ_p}$;
     \item the action of $\Gamma$ preserves $\NN(V)$ and is trivial on $\NN(V)/\pi\NN(V)$;
     \item $\vp(\pi^b\NN(V))\subset \pi^b\NN(V)$ and $\pi^b\NN(V)/ \vp^*(\pi^b\NN(V))$ is killed by $q^{b-a}$ where $q=\frac{\vp(\pi)}{\pi}$. (If $M$ is a $R$-module  equipped with a Frobenius $\vp$ where $R$ is any ring, then $\vp^*(M)$ denotes the $R$-module generated by $\vp(M)$.) 
    \end{enumerate}

    If $V$ is crystalline and positive, then we can take $b=0$ above, so $\vp$ preserves $\NN(V)$. In this case, if we endow $\NN(V)$ with the filtration $\Fil^i\NN(V)=\{x\in\NN(V)\mid \vp(x)\in q^i\NN(V)\}$, then $\NN(V)/\pi\NN(V)$ is a filtered $E$-linear $\vp$-module, and as shown in~\cite[\S III.4]{berger04} we have an isomorphism of filtered $\vp$-modules $\NN(V)/\pi\NN(V) \cong \Dcris(V)$. Also, as shown in \cite[II.2.1]{berger04}, $\Dcris(V)$ is contained in $\NN(V)\otimes_{\BB^+_{\Qp}}\Brig$, and in particular we can recover $\Dcris(V)$ as
    \[\Dcris(V)=\big(\NN(V)\otimes_{\BB^+_{\Qp}}\Brig\big)^{\Gamma}.\] 
    Moreover, we have a comparison isomorphism
    \begin{equation}\label{comparison} 
     \NN(V)\otimes_{\BB^+_{\Qp}}\Brig[t^{-1}]\cong \Dcris(V)\otimes_{\Qp}\Brig[t^{-1}],
    \end{equation}
    which holds independent of the Hodge-Tate weights of $V$. 
   
    If $T$ is a $G_{\QQ_p}$-stable lattice in $V$, then $\NN(T)=\NN(V)\cap\DD(T)$ is an $\calO_E\otimes_{\ZZ_p}\AA_{\QQ_p}^+$-lattice in $\NN(V)$, and by~\cite[\S III.4]{berger04} the functor $T\rightarrow \NN(T)$ gives a bijection between the $G_{\QQ_p}$-stable lattices $T$ in $V$ and the $\calO_E\otimes_{\ZZ_p}\AA_{\QQ_p}^+$-lattices in $\NN(V)$ satisfying the conditions (1)-(3) above.
   
    Finally, it is easy to see from the construction that for all $j\in\ZZ$, $\NN(V)$ and $\NN(V(j))$ are related by 
    \[ \NN(V(j))=\pi^{-j}\NN(V)\otimes e_j,\]
    where $e_j$ is a basis for $\Qp(j)$.

   \subsubsection{Iwasawa cohomology}\label{sect:iwasawa}

    If $V$ is a $p$-adic representation of $G_{\Qp}$, and $T$ is a $G_{\Qp}$-stable $\Zp$-lattice in $V$, we define the \emph{Iwasawa cohomology} of $T$ to be 
    \[ \Hiw(\Qp, T) := \varprojlim_n H^1(\Qp(\mu_{p^n}), T).\]
    As shown in~\cite{perrinriou94}, this is a $\Lambda_{\Zp}(\Gamma)$-module of finite rank. Define $\Hiw(\Qp, V) := \Qp \otimes_{\Zp} \Hiw(\Qp, T)$ for any $G_{\Qp}$-stable lattice $T$; this definition is independent of the choice of  $T$.

    By \cite[Theorem A.3]{berger03}, if the Hodge-Tate weights of $V$ are $\ge 0$ and $V$ has no quotient isomorphic to the trivial representation, there is a canonical isomorphism of $\Lambda_{\Qp}(\Gamma)$-modules
    \begin{equation}\label{Fontaineisom}
    \Hiw(\Qp, V) \cong \NN(V)^{\psi = 1},
    \end{equation}
    which also identifies $\Hiw(\Qp, T)$ with $\NN(T)^{\psi = 1}$ for each lattice $T$. These constructions clearly commute with the additional $\calO_E$-linear and $E$-linear structures when $V$ is an $E$-linear representation and $T$ is a $G_{\Qp}$-stable $\calO_E$-lattice.


 \section{Setup}

  Let $p$ be prime and let $f$ be a normalised new modular eigenform of level $N$, character $\epsilon$ and weight $k \ge 2$, with $N$ prime to $p$ and $k \le p-1$. Let $F$ be the coefficient field of $f$; we fix a choice of prime of $F$ above $p$ and let $E$ be the completion of $F$ at that prime, which we regard as a subfield of $\overline{\QQ}_p$.

  We assume that $f$ is \emph{ordinary}, i.e.~$a_p(f)$ is a $p$-adic unit. Thus the roots of the Hecke polynomial $X^2 - a_p(f) X + p^{k-1} \varepsilon$, where $\varepsilon = \epsilon(p)$, are elements of $E$ with valuations $0$ and $k - 1$. Let $\alpha$ be the unit root and $\beta = p^{k-1} \varepsilon \alpha^{-1}$ the non-unit root. We define $\mu := p^{k-1} \alpha / \beta = \varepsilon^{-1} \alpha^2$, which is a $p$-adic unit. By the Deligne--Ramanujan--Petersson bound on $|a_p|$, all embeddings of $\mu$ into $\mathbb{C}$ have complex absolute value $p^{k-1}$; in particular, $\mu \ne 1$.

  We define $(\rho_f, V_f)$ to be the ``cohomological'' $p$-adic representation attached to $f$, so the characteristic polynomial of \textit{geometric} Frobenius at primes $\ell \nmid Np$ is $X^2 - a_\ell X + \ell^{k-1} \epsilon(\ell)$. Then it is well known that
  \[ \rho_{f} \mid_{G_{\Qp}} \cong 
    \begin{pmatrix}
    \lambda(\alpha) & * \\
    0 & \chi^{1-k} \lambda(\varepsilon \alpha^{-1})
    \end{pmatrix}
  \]
  where $\lambda(x)$ denotes the unramified character of $G_{\Qp}$ mapping geometric Frobenius to $x$. In particular, $\rho_f \mid_{G_{\Qp}}$ has an unramified subrepresentation.

  Since $f$ has level prime to $p$, $\rho_{f} \mid_{G_{\Qp}}$ is crystalline, and the characteristic polynomial of $\vp$ on $\Dcris(\rho_f)$ is $X^2 - a_p X + p^{k-1} \varepsilon$. The unramified subrepresentation of $\rho_f \mid_{G_{\Qp}}$ corresponds to the $\vp = \alpha$ eigenspace.

  We shall mostly work with the ``homological'' representation $(\rho_f^*, V_f^*)$, the linear dual of $V_f$. This is crystalline at $p$ and the \emph{arithmetic} Frobenius at primes $\ell \nmid Np$ has the Hecke polynomial as its characteristic polynomial. On the decomposition group it is given by 
  \[ \rho_{f}^* \mid_{G_{\Qp}} \cong 
    \begin{pmatrix}
    \chi^{k-1} \lambda(\varepsilon^{-1} \alpha) & * \\
    0 & \lambda(\alpha^{-1}),
    \end{pmatrix}\]
  so the $\vp$-eigenvalues on $\Dcris(V_f^*)$ are $\beta^{-1} = p^{1-k} \varepsilon^{-1} \alpha$ (corresponding to the subrepresentation with Hodge-Tate weight $k - 1$) and $\alpha^{-1}$.

  We let $T_f$ and $T_f^*$ denote the canonical $G_{\QQ}$-stable $\calO_E$-lattices in $V_f$ and $V_f^*$ defined using the cohomology of the modular curve $X_1(N)_{\overline{\QQ}}$ with coefficients in $\mathbb{Z}_p$, as in \cite[\S 8.3]{kato04}.

 
 \section{A sequence of polynomials}

  In this section we define a certain sequence of polynomials which are related to the Eulerian polynomials.

  \begin{definition}
   Let the polynomials $h_j(X)$, for $j \ge 0$, be defined by $h_0(X) = 1$ and
   \begin{equation}\label{defhj}
    h_{j}(X) = (1 + X)\left(- X \frac{\mathrm{d}}{\mathrm{d}X} + j\right)h_{j-1}(X).
   \end{equation}
  \end{definition}
   
  It is immediate that $h_j$ is a monic polynomial of degree $j$ with integral coefficients and constant term $h_j(0) = j!$.

  \begin{proposition}\label{prop:bernoulli}
   For any $j \ge 0$, the following identity of formal power series holds in $\QQ[[t]]$:
   \[ \left(\frac{t}{e^t - 1}\right)^{j + 1}h_j(e^t - 1) = j! \left(1 + (-1)^j \sum_{n \ge j+1} \binom{n-1}{j} \frac{B_n t^n}{n!}\right),\] 
   where $B_n$ is the $n$-th Bernoulli number.
  \end{proposition}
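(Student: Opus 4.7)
The plan is to prove the proposition by induction on $j$, using the fact that both sides satisfy the same first-order ODE-type recurrence.

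\smallskip

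\emph{Base case.} For $j=0$, the left-hand side is $\frac{t}{e^t-1}\cdot h_0(e^t-1) = \frac{t}{e^t-1}$, whose Taylor expansion is $\sum_{n\ge 0} B_n \frac{t^n}{n!}$ by definition of the Bernoulli numbers, which matches the right-hand side for $j=0$.

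\smallskip

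\emph{Recursive step.} Write $u=e^t-1$ and set $F_j(t)= \left(\frac{t}{u}\right)^{j+1} h_j(u)$ and $R_j(t)= j!\bigl(1 + (-1)^j \sum_{n\ge j+1}\binom{n-1}{j}\frac{B_n t^n}{n!}\bigr)$. I claim both sequences satisfy the recurrence
\[
X_j(t) = j X_{j-1}(t) - t X_{j-1}'(t).
\]
For the $F_j$ side, I would substitute the defining recurrence \eqref{defhj} for $h_j$, write
\[
F_j(t) = \frac{t^{j+1}(1+u)}{u^{j+1}}\bigl[j\, h_{j-1}(u) - u\, h_{j-1}'(u)\bigr],
\]
and then compute $t F_{j-1}'(t)$ directly by differentiating $F_{j-1}(t) = \frac{t^j}{u^j}h_{j-1}(u)$ with $\tfrac{du}{dt}=1+u$. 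Collecting terms yields $t F_{j-1}'(t)=j F_{j-1}(t)-F_j(t)$, which is the desired recurrence.

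\smallskip

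For the $R_j$ side, I would compute $j R_{j-1}(t) - t R_{j-1}'(t)$ termwise: the constant term $(j-1)!\cdot j = j!$ agrees, and the coefficient of $B_n t^n/n!$ for $n\ge j$ is
\[
(-1)^{j-1}\bigl[j!\, \tbinom{n-1}{j-1} - (j-1)!\,n\tbinom{n-1}{j-1}\bigr] = (-1)^j(j-1)!(n-j)\tbinom{n-1}{j-1}.
\]
The term at $n=j$ vanishes, and for $n\ge j+1$ the elementary identity $(n-j)\binom{n-1}{j-1} = j\binom{n-1}{j}$ turns this into $(-1)^j j!\binom{n-1}{j}$, matching $R_j(t)$. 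Combined with the base case, induction completes the proof.

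\smallskip

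The main obstacle is simply the bookkeeping in establishing the recurrence $F_j = jF_{j-1} - tF_{j-1}'$ from \eqref{defhj}; once that is in hand, verifying that $R_j$ satisfies the same recurrence is a straightforward manipulation of binomial coefficients.
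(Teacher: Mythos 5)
Your argument is correct and is essentially the paper's own proof: both proceed by induction from the $j=0$ Bernoulli-number identity, applying the operator $-t\frac{\mathrm{d}}{\mathrm{d}t}+j$ to both sides and checking it reproduces the recurrence \eqref{defhj} on one side and shifts the binomial sum on the other. You have merely written out the two verifications that the paper leaves as computations.
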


  \begin{proof}
   The case $j = 0$ of the proposition is the definition of the Bernoulli numbers,
   \[ \frac{t}{e^t - 1} = 1 + \sum_{n \ge 1} \frac{B_n t^n}{n!}.\]
   If $D_j$ is the operator $\left(-t \tfrac{d}{dt} + j\right)$, then we compute that
   \[
    D_j \cdot \left(\frac{t}{e^t - 1}\right)^{j}h_{j-1}(e^t - 1) = \left(\frac{t}{e^t - 1}\right)^{j+1} h_j(e^t - 1)
   \]
   and
   \begin{multline*}
    D_j \cdot (j-1)!\left(1 + (-1)^{j-1} \sum_{n \ge j} \binom{n-1}{j-1} \frac{B_n t^n}{n!}\right)\\
       = j! \left( 1 + (-1)^j \sum_{n \ge j+1} \binom{n-1}{j} \frac{B_n t^n}{n!}\right).
   \end{multline*}
   So the proposition holds for all $j$ by induction.
  \end{proof}

 
 \section{Calculating the \texorpdfstring{$(\vp, \Gamma)$}{(phi, Gamma)}-module}\label{basis}

  Let $V_1 = E(\chi^{k-1} \lambda(\varepsilon^{-1} \alpha))$ and $V_2 = E(\lambda(\alpha^{-1}))$. As a consequence of \cite[Theorem 0.2(ii)]{colmez08}, the space
  \[ \operatorname{Ext}^1_{G_{\Qp}}(V_2, V_1) \cong H^1(\Qp, \chi^{k-1} \lambda(\mu)) \]
  is 1-dimensional, and thus up to isomorphism there are exactly two extensions: one split and one non-split. In this section we give an explicit description of the $(\vp, \Gamma)$-module corresponding to the non-split extension $V$.

  Since the $(\vp,\Gamma)$-module functor is exact, $\DD(V)$ is a free module of rank 2 over $\BB_{\Qp}$ with a basis $v_1, v_2$, where $v_1$ is a basis for the $(\vp, \Gamma)$-module of $V_1$ and the image of $v_2$ is a basis for the $(\vp, \Gamma)$-module of the quotient $V_2$. Thus in this basis $(v_1, v_2)$, $\vp$ acts via the matrix
  \[ P = \begin{pmatrix} \varepsilon^{-1} \alpha & x \\ 0 & \alpha^{-1}\end{pmatrix}\]
  and an element $\gamma \in \Gamma$ acts via
  \[ G = \begin{pmatrix} \chi(\gamma)^{k-1} & \alpha y \\ 0 & 1 \end{pmatrix}\]
  for some $x, y \in \BB_{\Qp}$. In the split case, we may clearly take $x = y = 0$; so let us assume we are in the non-split case. Since the actions of $\vp$ and $\Gamma$ commute, we must have $P \vp(G) = G \gamma(P)$, so $x$ and $y$ satisfy
  \[ (\mu \vp - 1)(y) = (\chi(\gamma)^{k-1} \gamma - 1)(x).\]
  This, of course, is exactly the requirement that $(x, y)$ forms a $1$-cocycle in the Herr complex (see e.g.~\cite[\S I.4]{cherbonniercolmez99}) calculating $H^1(\Qp, \chi^{k-1} \lambda(\mu))$.
  We shall construct an explicit choice of $(x, y)$ realising the non-split extension.

  \begin{proposition}
   If $x_k = \pi^{1-k} h_{k-2}(\pi)$, where $h_j$ are the polynomials of definition \ref{defhj}, then we have
   \[ (\chi(\gamma)^{k-1} \gamma - 1) x_k \in \AA^+_{\Qp}\]
   for all $\gamma \in \Gamma$; and $h_{k-2}$ is the unique monic polynomial of degree $k - 2$ such that this holds.
  \end{proposition}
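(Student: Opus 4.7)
The plan is to prove existence by induction on $k$ via the identity $x_k = -\tfrac{d}{dt}\, x_{k-1}$, where $\tfrac{d}{dt} = (1+\pi)\tfrac{d}{d\pi}$, and uniqueness by a leading-pole analysis combined with a short residue calculation. The identity for $x_k$ is a direct computation: differentiating $x_{k-1} = \pi^{2-k} h_{k-3}(\pi)$ by the Leibniz rule produces $-\pi^{1-k} (1+\pi)\bigl[(k-2) - \pi\tfrac{d}{d\pi}\bigr] h_{k-3}(\pi)$, and the inner factor is exactly $-h_{k-2}(\pi)$ by the defining recursion~\eqref{defhj}.

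For the inductive step, the commutation $\tfrac{d}{dt}\, \gamma = \chi(\gamma)\, \gamma\, \tfrac{d}{dt}$, which is immediate from $\gamma(t) = \chi(\gamma) t$, rearranges to give
\[
(\chi(\gamma)^{k-1} \gamma - 1)\, x_k \;=\; -\tfrac{d}{dt}\, \bigl[(\chi(\gamma)^{k-2}\gamma - 1)\, x_{k-1}\bigr],
\]
so if the bracketed expression lies in $\AA^+_{\Qp} = \Zp[[\pi]]$ by induction, its image under $\tfrac{d}{dt}$ (which visibly preserves $\Zp[[\pi]]$) does too. The base case $k=2$, where $x_2 = 1/\pi$, reduces to verifying $(\chi(\gamma)\gamma - 1)\pi^{-1} = (\chi(\gamma)\pi - \gamma\pi)/(\pi \cdot \gamma\pi) \in \Zp[[\pi]]$, which follows from $\gamma\pi = \chi(\gamma)\pi + \binom{\chi(\gamma)}{2}\pi^2 + \dots$ making the numerator divisible by $\pi^2$ and the denominator equal to $\pi^2$ times a unit in $\Zp[[\pi]]$.

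For uniqueness, assume $\tilde h$ is another monic degree-$(k-2)$ solution, and set $d = h_{k-2} - \tilde h$ (so $\deg d \leq k-3$) and $y = \pi^{1-k}\, d(\pi)$; by linearity $(\chi(\gamma)^{k-1}\gamma - 1)y \in \Qp[[\pi]]$ for all $\gamma$. If $y$ has a $\pi$-pole of order $M \geq 1$, the coefficient of $\pi^{-M}$ in the output equals the leading coefficient of $y$ times $\chi(\gamma)^{k-1-M} - 1$, which vanishes for all $\gamma \in \Gamma$ only if $M = k-1$. Since $t^{1-k}$ is killed by $\chi(\gamma)^{k-1}\gamma - 1$, subtracting an appropriate scalar multiple of $t^{1-k}$ cancels the $\pi^{1-k}$-term of $y$; the new element still satisfies the condition but has $\pi$-pole of order $< k-1$, so by the same argument it lies in $\Qp[[\pi]]$. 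Now compare $\pi^{-1}$-coefficients: $y$ has none (since $\deg d \leq k-3$ means its lowest $\pi$-power is $\pi^{-2}$), a $\Qp[[\pi]]$-element has none, but the $\pi^{-1}$-coefficient of $t^{1-k}$ equals $\mathrm{Res}_{t=0}\, e^t t^{1-k}\, dt = 1/(k-2)!$ (using $d\pi = e^t\, dt$), which is nonzero. This forces the $\pi^{1-k}$-coefficient of $y$ to vanish, contradicting $M = k-1$, and hence $y = 0$.

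I expect the main technical care to be in the uniqueness step, specifically the residue calculation and the careful tracking of pole orders in both the $\pi$- and $t$-expansions under the $\Gamma$-action; existence is a routine induction once the derivative identity $x_k = -\tfrac{d}{dt} x_{k-1}$ has been observed.
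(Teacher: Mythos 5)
Your proof is correct, but it takes a genuinely different route from the paper's. The paper conjugates the twisted operator, writing $(\chi(\gamma)^{k-1}\gamma-1)(\pi^{1-k}f) = t^{1-k}(\gamma-1)\bigl((\tfrac{t}{\pi})^{k-1}f\bigr)$, which reduces both existence and uniqueness to the single power-series statement that $(\tfrac{t}{\pi})^{k-1}h_{k-2}(\pi)$ is a constant plus $O(t^{k-1})$ --- exactly the Bernoulli-number identity of Proposition 3.2 --- with uniqueness then being immediate linear algebra (multiplication by the unit $(\tfrac{t}{\pi})^{k-1}$ is injective on polynomials modulo $t^{k-1}$). You instead induct on $k$ via $x_k = -\tfrac{d}{dt}x_{k-1}$ together with the commutation $\tfrac{d}{dt}\gamma = \chi(\gamma)\gamma\tfrac{d}{dt}$; this is essentially the same differential-operator induction the paper runs inside the proof of Proposition 3.2 (via $D_j = -t\tfrac{d}{dt}+j$), but transplanted directly to the cocycle condition, so you never need the Bernoulli expansion. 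Your version is more self-contained for this proposition and delivers integrality in $\Zp[[\pi]]$ for free from the base case, whereas the paper's route has the advantage of simultaneously producing the $t$-expansion of $t^{k-1}x_k$, which is reused later (Proposition \ref{prop:a-formula}). Your uniqueness argument --- pole-order analysis forcing $M=k-1$, subtracting a multiple of the $\Gamma$-eigenvector $t^{1-k}$, and the residue computation $\mathrm{Res}_{\pi=0}(t^{1-k}\,d\pi) = 1/(k-2)! \ne 0$ --- is heavier than the paper's one-line linear algebra but is complete and correct; all the individual claims (the leading-coefficient formula $c(\chi(\gamma)^{k-1-M}-1)$, the absence of a $\pi^{-1}$ term in $\pi^{1-k}d(\pi)$ when $\deg d \le k-3$, and the nonvanishing of the residue) check out.
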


  \begin{proof}
   Let $f$ be any element of $\Qp[[t]]$ with $f(0) = 1$, and let $F = \pi^{1-k} f$. Then we find that
   \[ (\chi(\gamma)^{k-1} \gamma - 1) F = t^{1-k}(\gamma - 1)(t^{k-1} F) = t^{1-k}(\gamma - 1)\left( (\tfrac{t}{\pi})^{k-1} f\right).\]
   Hence we have $(\chi(\gamma)^{k-1} \gamma - 1) F \in \Qp[[t]]$ if and only if $(\gamma - 1)\left( (\tfrac{t}{\pi})^{k-1} f\right) \in t^{k-1} \Qp[[t]]$, or equivalently $(\tfrac{t}{\pi})^{k-1} f \in 1 + t^{k-1} \Qp[[t]]$. By construction, $f = h_{k-2}(\pi) = h_{k-2}(e^t - 1)$ satisfies this (and it is obvious that there is a unique polynomial with this property up to scaling). Since $F$ lies in $\AA_{\Qp}$, we have  $(\chi(\gamma)^{k-1} \gamma - 1) F \in \AA_{\Qp} \cap \Qp[[t]] = \AA^+_{\Qp}$.
  \end{proof}

  Recall that $\mu := \varepsilon^{-1} \alpha^2 \ne 1$.

  \begin{lemma}\label{lemma:noncoboundary}
   There is no $z \in E \otimes_{\Qp} \BB_{\Qp}$ such that $(\mu\vp - 1)(z) = \pi^{1-k} h_{k-2}(\pi)$.
  \end{lemma}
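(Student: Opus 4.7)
The plan is to derive a contradiction by reducing the equation modulo the maximal ideal of $\calO_E$, exploiting the fact that in characteristic $p$ one has $\vp(\bar\pi) = (1+\bar\pi)^p - 1 = \bar\pi^p$. Under this reduction the equation forces $\bar z$ to have nonzero Laurent coefficients at all of the positions $\bar\pi^{-p^j}$, which is incompatible with lying in the field $k_E((\bar\pi))$.

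Suppose for contradiction that $z \in E \otimes_{\Qp} \BB_{\Qp}$ satisfies $(\mu\vp - 1)z = x_k$, where $x_k = \pi^{1-k}h_{k-2}(\pi)$. I would first multiply by a suitable power of the uniformizer $\pe$ of $\calO_E$ so that $z \in \calO_E \otimes \AA_{\Qp}^+$ lies in $\calO_E \otimes \AA_{\Qp}$ and has nonzero reduction $\bar z$ in $(\calO_E \otimes \AA_{\Qp})/\pe \cong k_E((\bar\pi))$; the equation becomes $(\mu\vp - 1) z = \pe^C x_k$ for some integer $C \geq 0$, which on reducing modulo $\pe$ yields
\[ (\bar\mu\vp - 1)\bar z = \overline{\pe^C x_k} \]
in $k_E((\bar\pi))$, with $\vp$ acting by $\bar\pi \mapsto \bar\pi^p$ and $\bar\mu \in k_E$ the reduction of $\mu$.

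In the main case $C = 0$, writing $\bar z = \sum \bar a_n \bar\pi^n$, the coefficient of $\bar\pi^m$ in $(\bar\mu\vp - 1)\bar z$ equals $\bar\mu\bar a_{m/p}$ when $p \mid m$ and $-\bar a_m$ otherwise. The right-hand side $\bar x_k = \bar\pi^{1-k}h_{k-2}(\bar\pi)$ has coefficient $1$ at $\bar\pi^{-1}$ (by monicity of $h_{k-2}$) and is supported on $\{1-k,\ldots,-1\}$; the hypothesis $k \leq p-1$ forces $-p^j < 1-k$ for every $j \geq 1$, so $\bar x_k$ has no term at any $\bar\pi^{-p^j}$ with $j \geq 1$. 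Matching at $\bar\pi^{-1}$ gives $\bar a_{-1} = -1$, and the recurrence $\bar a_{-p^j} = \bar\mu\bar a_{-p^{j-1}}$ then yields $\bar a_{-p^j} = -\bar\mu^{\,j}$ for all $j \geq 0$. Since $\bar\mu \in k_E^\times$, these coefficients are all nonzero, so $\bar z$ has nonzero coefficients at the strictly decreasing positions $-p^j$, contradicting the finite principal part of any element of $k_E((\bar\pi))$.

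Finally, I would dispose of the case $C \geq 1$, where the reduced right-hand side vanishes and $\bar z$ lies in $\ker(\bar\mu\vp - 1)$. A lowest-term analysis shows this kernel is trivial when $\bar\mu \neq 1$, contradicting $\bar z \neq 0$; when $\bar\mu = 1$ the kernel is $k_E$, so $\bar z$ is a nonzero constant, and subtracting off a lift and dividing by $\pe$ produces a new equation of the same form with strictly smaller value of $C$ or of $e := v_\pe(\mu - 1)$. The procedure terminates in at most $C + e$ steps, ending either in the contradiction from the main case or in an equation of the form $(\vp - 1)\bar w = (\text{nonzero constant})$, which is impossible because $\vp - 1$ kills constant terms. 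The main obstacle is precisely the $\bar\mu = 1$ branch, which is not a priori excluded by $\mu \neq 1$ in $E$, but the bounded-length iteration just described resolves it.
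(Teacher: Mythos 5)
Your argument is correct, and it rests on the same core mechanism as the paper's proof: reduce modulo the maximal ideal and exploit the fact that in characteristic $p$ the Frobenius acts by $\bar\pi\mapsto\bar\pi^p$, so a nonzero coefficient at $\bar\pi^{-1}$ propagates to nonzero coefficients at $\bar\pi^{-p^j}$ for all $j$, which is incompatible with the finite principal part of an element of $k_E((\bar\pi))$. The difference is in the organization. The paper stays in characteristic zero as long as possible: it splits $\AA_{\Qp}=\pi\AA^{+}_{\Qp}\oplus\AA^{\le 0}_{\Qp}$, observes that any solution $z$ must lie in $\AA^{\le 0}_{\Qp}[p^{-1}]$ with vanishing constant term (the latter uses $\mu\ne 1$ in $E$, before any reduction), and only then reduces mod $p$, where the contradiction becomes a one-line degree count in $\Fp[X]$ with $X=\pi^{-1}$. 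You reduce mod $\pe$ immediately, which forces you to confront two extra issues the paper's ordering sidesteps: the power $\pe^C$ needed to make $z$ integral with nonzero reduction, and the possibility $\bar\mu=1$, which is not excluded by $\mu\ne 1$. Your descent on $C$ handles both correctly (the added constant on the right-hand side never interferes with the coefficient matching at $\bar\pi^{-1}$ and $\bar\pi^{-p^j}$), at the cost of some length; killing the constant term in characteristic zero first, as the paper does, is the cleaner route. One slip of notation: the coefficient of $\bar\pi^m$ in $(\bar\mu\vp-1)\bar z$ for $p\mid m$ is $\bar\mu\bar a_{m/p}-\bar a_m$, not $\bar\mu\bar a_{m/p}$; the recurrence $\bar a_{-p^j}=\bar\mu\,\bar a_{-p^{j-1}}$ that you actually use is the one coming from the correct formula, so nothing downstream is affected.
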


  \begin{proof}
   We assume (for simplicity of notation) that $E = \Qp$. Observe that we have a decomposition $\AA_{\Qp} = \pi \AA^{+}_{\Qp} \oplus \AA^{\leqslant 0}_{\Qp}$, where $\AA^{\le 0}_{\Qp}$ consists of those series with only non-positive powers of $\pi$; thus $\AA^{\le 0}_{\Qp}$ is a subring isomorphic to $\Zp\langle X \rangle$, where $X = \pi^{-1}$. Also, $\vp$ preserves $\AA^{\le 0}_{\Qp}$, since the series expansion of $\vp(\pi^{-1})$ lies in $\AA^{\le 0}_{\Qp}$; indeed this gives an action of $\vp$ on $\Zp\langle X \rangle$ lifting the canonical Frobenius on $\Fp[X]$.

   If $z$ is such that $(\mu\vp - 1)(z) = \pi^{1-k} h_{k-2}(\pi)$, then we must have $z \in \BB^{\le 0}_{\Qp} = \AA^{\le 0}_{\Qp}[p^{-1}]$, and the constant term of $z$ is zero. If $z \ne \AA^{\le 0}_{\Qp}$, then there is some $j > 0$ such that $z' = p^j z$ is in $\AA^{\le 0}_{\Qp}$ and its mod $p$ reduction $\overline{z'}$ is a non-zero element of $X\Fp[X]$. But then $z'$ must satisfy $(\mu \vp - 1)(\overline{z'}) = 0$, which is clearly impossible as $\vp$ increases the degree of any non-constant polynomial.

   Hence $z \in \AA^{\le 0}_{\Qp}$. Then $\overline{z} \in \Fp[X]$ and $(\mu \vp - 1)(\overline{z})$ is a non-constant polynomial in $X$ of degree $k-1 < p$. It is clear that no such polynomial can lie in the image of $\mu \vp - 1$.
  \end{proof}

  We now define $x_k = \pi^{1-k} h_{k-2}(\pi) - \delta_k$, where $\delta_k = (-1)^k \frac{B_{k-1}}{(k-1)}$. Since $k-1 < p-1$, $\delta_k \in \Zp$, and hence $x_k \in \AA_{\Qp}$. Moreover, the choice of $\delta$ implies that $t^{k-1} x_k = (k-2)! + O(t^k)$ in $\Qp[[t]]$; hence $(\chi(\gamma)^{k-1} \gamma - 1) (x_k) \in \pi \AA^+_{\Qp}$.

  \begin{proposition}
   There exists $y_k \in \EA$ solving
   \[ (\mu \vp - 1)(y_k) = (\chi(\gamma)^{k-1} \gamma - 1) (x_k).\]
  \end{proposition}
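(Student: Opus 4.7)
The plan is to write $y_k$ as the formal Neumann series for $(\mu\vp - 1)^{-1}$ applied to $g := (\chi(\gamma)^{k-1}\gamma - 1)(x_k)$. Since $g$ has just been shown to lie in $\pi\AA^+_{\Qp} \subseteq \EA$, and since $\mu$ is a $p$-adic unit, I would propose
\[ y_k := -\sum_{n \ge 0} \mu^n \vp^n(g). \]
Granting convergence in $\EA$, the identity $(\mu\vp - 1)(y_k) = g$ is immediate from telescoping, so the entire problem reduces to showing that this series converges in the $(\pe, \pi)$-adic (weak) topology, in which $\EA$ is complete.

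Because each $\mu^n$ is a unit in $\calO_E$, convergence reduces to proving $\vp^n(g) \to 0$ weakly. Writing $g = \pi h$ with $h \in \AA^+_{\Qp}$ gives $\vp^n(g) = \vp^n(\pi) \vp^n(h)$, which reduces the problem once more to showing $\vp^n(\pi) \to 0$. The central computation is the explicit formula
\[ \vp^n(\pi) = (1+\pi)^{p^n} - 1 = \sum_{k=1}^{p^n} \binom{p^n}{k} \pi^k \]
together with the identity $v_p\!\binom{p^n}{k} = n - v_p(k)$ for $1 \le k \le p^n$. For fixed $M \ge 1$, a term $\binom{p^n}{k}\pi^k$ survives modulo $p^M$ only when $p^{n-M+1} \mid k$, so
\[ \vp^n(\pi) \in \pi^{p^{n-M+1}}\AA^+_{\Qp} + p^M \AA^+_{\Qp}. \]

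Given any basic neighbourhood $\pi^N \EA + \pe^J \EA$ of $0$, one picks $M$ with $eM \ge J$ (where $e$ denotes the ramification index of $E/\Qp$) and then $n$ large enough that $p^{n-M+1} \ge N$; for such $n$, both $\vp^n(\pi)$ and $\vp^n(g)$ lie in the prescribed neighbourhood, so the series converges to an element $y_k \in \EA$ as required. The main obstacle is really just the binomial estimate on $\vp^n(\pi)$; once that is in hand, the rest of the argument is formal, and the verification that $(\mu\vp - 1)(y_k) = g$ is a one-line telescoping.
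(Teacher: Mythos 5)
Your proposal is correct and follows essentially the same route as the paper: both apply the Neumann series $-\sum_{n\ge 0}\mu^n\vp^n(g)$ to $g=(\chi(\gamma)^{k-1}\gamma-1)(x_k)\in\pi\AA^+_{\Qp}$ and reduce convergence to the fact that $\vp^n(\pi)\to 0$ in the $(p,\pi)$-adic topology. The only difference is that you spell out the binomial-valuation estimate for $\vp^n(\pi)$, which the paper simply asserts.
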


  \begin{proof}
   It is clear that $\mu \vp - 1$ is surjective on $\pi \left(\EA\right)$ for any $\mu \in \calO_E$: if $z \in \pi \AA^+_{\Qp}$, then we have $\vp^n(z) \in \vp^n(\pi) \AA^+_{\Qp}$, and $\vp^n(\pi)$ tends to zero in the $(p, \pi)$-adic topology of $\AA^+_{\Qp}$. Thus the series expansion $-\sum_{n \ge 0} \mu^n \vp^n(z)$ converges, and its limit is clearly a preimage of $z$.
  \end{proof}

  Note that $y_k$ depends on $k$, $\mu$, and $\gamma$, but $x_k$ depends only on $k$. By construction, $(x_k, y_k)$ determines a class in $H^1$ of the Herr complex, and Lemma \ref{lemma:noncoboundary} shows that this class is not zero. Since $H^1(\Qp, \chi^{k-1} \lambda(\mu))$ is one-dimensional, we deduce that the $(\vp, \Gamma)$-module we have constructed corresponds to the unique non-split extension of the two factors. 


 \section{Calculations in Wach modules}
 
  We deduce that the $(\vp, \Gamma)$-module of the unique non-split extension $V$ has a basis $(v_1, v_2)$ for which the matrices of $\vp$ and the generator $\gamma$, in the basis $(v_1, v_2)$, are given by
  \[ P = \begin{pmatrix} \varepsilon^{-1} \alpha & x_k \\ 0 & \alpha^{-1}\end{pmatrix}\]
  and 
  \[ G = \begin{pmatrix} \chi(\gamma)^{k-1} & \alpha y \\ 0 & 1 \end{pmatrix}.\]
  We let $n_1 = \pi^{1-k}v_1$ and $n_2 = v_2$. Then the matrices of $\vp$ and $\gamma$ in the basis $(n_1, n_2)$ are given by
  \[ P' = \begin{pmatrix} \frac{\pi^{k-1}}{\vp(\pi)^{k-1}} \varepsilon^{-1} \alpha & \pi^{k-1} x_k \\ 0 & \alpha^{-1}\end{pmatrix}\]
  and 
  \[ G' = \begin{pmatrix} \frac{\pi^{k-1}}{\gamma(\pi^{k-1})} \chi(\gamma)^{k-1} & \pi^{k-1} \alpha y \\ 0 & 1\end{pmatrix}.\]

  It follows that if $N$ is the $E \otimes \BB^+_{\Qp}$-span of $n_1$ and $n_2$, the module $N$ satisfies the conditions (1)-(3) of \S \ref{sect:wach}, so $N = \NN(V)$, the Wach module of $V$. Moreover, the $\EA$-span of $(n_1, n_2)$ is a Wach module over $\EA$, and hence is $\NN(T)$ for a $G_{\Qp}$-stable $\calO_E$-lattice $T$ in $V$.

  \begin{lemma}\label{lemma:lattices}
   For each $c, d \in \ZZ$ with $c \le d$, there is a lattice $T_{c, d} \in V$ whose Wach module is the $\EA$-span of $\pe^c n_1, \pe^d n_2$, and every $G_{\Qp}$-stable $\calO_E$-lattice in $V$ is one of these. The residual representation $T_{c, d} / \pi_E T_{c, d}$ is non-split if $c = d$ and split otherwise.
  \end{lemma}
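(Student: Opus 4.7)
The plan is to split the proof into three parts: construction of $T_{c,d}$ for $c \le d$, exhaustiveness (every $G_{\Qp}$-stable $\calO_E$-lattice is some $T_{c,d}$), and the splitness criterion for the residual representation.

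\textbf{Step 1 (Existence).} For each $c \le d$, set $N_{c,d} := \EA \pe^c n_1 + \EA \pe^d n_2 \subset \NN(V)$. I would check directly that $N_{c,d}$ satisfies conditions (1)--(3) of \S\ref{sect:wach}, so that by Berger's classification $N_{c,d} = \NN(T_{c,d})$ for a unique $G_{\Qp}$-stable $\calO_E$-lattice $T_{c,d} \subset V$. Freeness of rank $2$ is clear. In the basis $(\pe^c n_1, \pe^d n_2)$, the matrices of $\gamma$ and $\vp$ come from $G'$ and $P'$ by rescaling the off-diagonal entries by $\pe^{d-c}$. Since $d \ge c$, this factor lies in $\calO_E$, and the scaled off-diagonals $\pe^{d-c}\pi^{k-1}\alpha y_k$ and $\pe^{d-c}\pi^{k-1} x_k$ remain in $\pi \EA$ (using $\pi^{k-1} x_k = h_{k-2}(\pi) - \delta_k \pi^{k-1} \in \EA$, and similarly for $y_k$). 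This gives triviality of the $\Gamma$-action modulo $\pi$ (condition (2)) and $\vp$-stability of $\pi^{k-1} N_{c,d}$, with a direct matrix computation showing the quotient $\pi^{k-1} N_{c,d} / \vp^*(\pi^{k-1} N_{c,d})$ is killed by $q^{k-1}$ (condition (3)).

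\textbf{Step 2 (Exhaustiveness).} Let $T \subset V$ be any $G_{\Qp}$-stable $\calO_E$-lattice. The exact sequence $0 \to V_1 \to V \to V_2 \to 0$ induces the corresponding sequence of Wach modules. Intersection with $\NN(V_1)$ and projection to $\NN(V_2)$ give rank-$1$ Wach sub-lattices, forcing $\NN(T) \cap \NN(V_1) = \pe^c \EA n_1$ and image $\pe^d \EA n_2$ for unique $c, d \in \ZZ$ (Wach modules of lattices in rank-$1$ crystalline characters are classified by powers of $\pe$). Pick $m \in \NN(T)$ projecting to $\pe^d n_2$ and write $m = \pe^d n_2 + w n_1$ with $w \in E \otimes \BB_{\Qp}^+$ well-defined modulo $\pe^c \EA$. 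Conditions (2) and (3) on $m$ translate into congruences $(\gamma - 1)(w) \equiv -\pe^d \pi^{k-1}\alpha y_k \pmod{\pi \pe^c \EA}$ and $\mu \vp(w) - q^{k-1} w \equiv -\pe^d \alpha q^{k-1} \pi^{k-1} x_k \pmod{\pe^c \EA}$. Combining these I would conclude first that $\pe^{d-c}$ must be integral (forcing $c \le d$), and second that $w$ can be adjusted within its $\pe^c\EA$-coset to lie in $\pe^c\EA$, so that $\NN(T) = N_{c,d}$.

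\textbf{Step 3 (Residual representation).} Reducing $\NN(T_{c,d})$ modulo $\pe$, the matrices from Step 1 keep the same off-diagonal entries scaled by $\pe^{d-c} \bmod \pe$. For $c = d$, this factor is $1$ and the residual off-diagonals are the reductions of $\pi^{k-1}x_k$ and $\pi^{k-1}\alpha y_k$; the mod-$p$ argument of Lemma \ref{lemma:noncoboundary} (which proceeds entirely over $\Fp$) shows this cocycle remains non-trivial in residual cohomology, so $T_{c,c}/\pe T_{c,c}$ is non-split. For $c < d$, $\pe^{d-c} \equiv 0 \pmod \pe$ kills the off-diagonals modulo $\pe$, and the residual representation splits.

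The hard part is Step 2: showing that the lift parameter $w$ can be chosen integrally (i.e., the obstruction class vanishes). This is essentially a vanishing of $\operatorname{Ext}^1$ in the category of $\EA$-Wach modules, and I expect it reduces to the same rigidity used in Lemma \ref{lemma:noncoboundary} --- any genuinely non-integral $w$ satisfying the $\vp$-congruence would, after clearing denominators, yield a mod-$p$ coboundary for $\pi^{1-k}h_{k-2}(\pi)$, contradicting that lemma.
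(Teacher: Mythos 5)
Your Steps 1 and 3 are essentially the paper's argument: the paper dismisses the existence part as ``clear'', and obtains the non-splitness for $c=d$ from the fact that the mod-$p$ argument of Lemma \ref{lemma:noncoboundary} shows the reduction of the cocycle $(x_k,y_k)$ is a nonzero class. The genuine gap is in Step 2, exactly where you flag it yourself: the assertion that $w$ can be adjusted within its coset to lie in $\pe^c\EA$, and that $c\le d$ is forced, is the entire content of exhaustiveness, and ``I expect it reduces to the same rigidity used in Lemma \ref{lemma:noncoboundary}'' does not supply it. Moreover the rigidity you would need is of a different nature: Lemma \ref{lemma:noncoboundary} is a \emph{non-vanishing} statement about $H^1$ mod $p$, whereas adjusting $w$ integrally amounts to the \emph{injectivity} of the integral $H^1$ of $\chi^{k-1}\lambda(\mu)$ into the rational one, i.e.\ to the vanishing of $H^0\bigl(\Qp,(E/\calO_E)(\chi^{k-1}\lambda(\mu))\bigr)$ --- a separate fact you would still have to prove, on top of the translation of Berger's condition (3), which is a condition on the quotient module $\pi^{k-1}\NN/\vp^*(\pi^{k-1}\NN)$ and not literally a congruence on the single lift $m$.

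The paper sidesteps all of this by classifying lattices on the Galois side rather than the Wach-module side. Once one knows $T=T_{0,0}$ is residually non-split (your Step 3), the quotient $T/\pe T$ has a \emph{unique} nontrivial Galois-stable subspace, namely the line spanned by the reduction of $n_1$. For any lattice $T'$, normalised so that $T'\subseteq T$ but $T'\not\subseteq\pe T$, the image of $T'$ in $T/\pe T$ is nonzero and Galois-stable, hence is either everything (so $T'=T$ by Nakayama) or that line; devissage then gives $T'=\pe^j T+T_1$ with $T_1=T\cap V_1$, i.e.\ $T'=T_{0,j}$ with $j\ge 0$. This proves exhaustiveness and the constraint $c\le d$ in one stroke, with no cocycle-adjustment lemma. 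If you want to keep your structure, you must at least establish the residual non-splitness of $T_{0,0}$ \emph{before} the classification and use it in this way; as written, Step 2 does not close.
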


  \begin{proof}
   It is clear that $\pe^c n_1, \pe^d n_2$ span a Wach module for every $c \le d$, and hence correspond to a lattice in $V$. By construction the mod $\pe$ reduction of the cocycle $(x_k, y_k)$ is nontrivial, so $T_{0, 0} = T$ is residually non-split, and hence the same holds for $T_{c, c}$ for any $c$.

   Let $T'$ be any lattice in $V$; by scaling we may assume that it is contained in $T$, and not contained in $\pe T$. Then the image of $T'$ in $T / \pe T$ is a nontrivial Galois-stable subspace; so it is the subspace corresponding to the reduction of $n_1$, as $T / \pe T$ is non-split and thus has a unique 1-dimensional subspace. By devissage, we deduce that $T' = \pe^j T + T_1$ for some $j$, where $T_1 = T \cap V_1$ is the lattice corresponding to $n_1$. This corresponds to the cocycle $(\pe^j x_k, \pe^j y_k)$, and hence is residually split if $j \ge 1$.
  \end{proof}

  \begin{proposition}\label{prop:basis}
   The space $\left(\vp^* \NN(V)\right)^{\psi = 0}$ is free of rank $2$ as a $\Lambda_{E}(\Gamma)$-module, and a basis is given by $(1 + \pi) \vp(n_1), (1 + \pi)\vp(n_2)$. More specifically, if $T_{c,d}$ denotes the lattice in $V$ defined in Lemma \ref{lemma:lattices}, then $\pe^c (1 + \pi) \vp(n_1), \pe^d (1 + \pi) \vp(n_1)$ are a basis of $\big(\vp^*\NN(T_{c,d})\big)^{\psi=0}$ as a free $\Lambda_{\calO_E}(\Gamma)$-module.
  \end{proposition}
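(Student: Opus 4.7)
The plan is to combine a direct $\psi$-computation with the Mellin isomorphism from Section~1.3.1, trivialising the $\Gamma$-twist that arises via an explicit coboundary.

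Since $(n_1,n_2)$ is an $E\otimes\BB^+_{\Qp}$-basis of $\NN(V)$, the pullback $\vp^*\NN(V)$ is free of rank two with basis $(\vp(n_1),\vp(n_2))$, and $\vp^*\NN(T_{c,d})$ is $\calO_E\otimes\AA^+_{\Qp}$-free with basis $(\pe^c\vp(n_1),\pe^d\vp(n_2))$ by Lemma~\ref{lemma:lattices}. Using $\vp\psi(x)=\tfrac{1}{p}\sum_{\zeta^p=1}\sigma_\zeta(x)$ together with the decomposition $\AA^+_{\Qp}=\bigoplus_{j=0}^{p-1}(1+\pi)^j\vp(\AA^+_{\Qp})$, I would derive $\psi(f\vp(m))=\psi(f)m$ for $f\in E\otimes\BB^+_{\Qp}$ and $m\in\NN(V)$. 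Applied to $x=f_1\vp(n_1)+f_2\vp(n_2)$, this yields $\psi(x)=\psi(f_1)n_1+\psi(f_2)n_2$, so $x\in(\vp^*\NN(V))^{\psi=0}$ if and only if $f_1,f_2\in(E\otimes\BB^+_{\Qp})^{\psi=0}$; in particular $(1+\pi)\vp(n_i)\in(\vp^*\NN(V))^{\psi=0}$ since $\psi(1+\pi)=0$.

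To show the $\Lambda_E(\Gamma)$-linear map
\[ T:\Lambda_E(\Gamma)^2\to(\vp^*\NN(V))^{\psi=0},\qquad(\beta_1,\beta_2)\mapsto\beta_1\cdot(1+\pi)\vp(n_1)+\beta_2\cdot(1+\pi)\vp(n_2) \]
(with $\Gamma$ acting on the target via its natural action on $\vp^*\NN(V)$) is bijective, expand $T$ in the basis $(\vp(n_1),\vp(n_2))$ using the matrix $\vp(G')$ of $\gamma$, which is upper triangular with $\vp(G'_{22})=1$. The $\vp(n_2)$-component of $T(\beta_1,\beta_2)$ is $\mathfrak{M}(\beta_2)$, while the $\vp(n_1)$-component is $h_{11}(\beta_1):=\sum_g c_g^{(1)}(1+\pi)^{\chi(g)}\vp(G'_{11}(g))$ plus a correction involving $\beta_2$. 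Using $G'_{11}(\gamma)=u(\gamma)^{-(k-1)}$ with $u(\gamma):=\gamma(\pi)/(\chi(\gamma)\pi)=\gamma(\pi/t)/(\pi/t)$ (via $\gamma(t)=\chi(\gamma)t$), one finds the coboundary identity $\vp(G'_{11}(\gamma))=\gamma(\vp(c))/\vp(c)$ with $\vp(c):=(pt/\vp(\pi))^{k-1}\in(E\otimes\BB^+_{\Qp})^\times$ of constant term $1$. This rewrites $h_{11}(\beta)=\vp(c)^{-1}\mathfrak{M}(\lambda_0\beta)$, where $\lambda_0:=\mathfrak{M}^{-1}((1+\pi)\vp(c))\in\Lambda_E(\Gamma)$; applying the augmentation $\Lambda_E(\Gamma)\to E$ (sending $\gamma\mapsto 1$) gives $((1+\pi)\vp(c))|_{\pi=0}=1$, so $\lambda_0\in\Lambda_E(\Gamma)^\times$ and $h_{11}$, hence $T$, is bijective. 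The integral statement follows by the same argument combined with the congruence $h_{11}\equiv\mathfrak{M}\pmod{\vp(\pi)}$ and a $(p,\pi)$-adic iteration, using that $\vp(G'_{11})$ takes values in $(\calO_E\otimes\AA^+_{\Qp})^\times$.

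The main obstacle is the inversion of the twisted-Mellin map $h_{11}$; it is resolved by exhibiting the explicit coboundary $\vp(c)=(pt/\vp(\pi))^{k-1}$ which trivialises $\vp(G'_{11})$, and by verifying that the resulting scalar $\lambda_0=\mathfrak{M}^{-1}((1+\pi)\vp(c))$ has augmentation $1$ and is therefore a unit in $\Lambda_E(\Gamma)$.
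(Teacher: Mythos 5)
Your reduction to the diagonal entries of the (upper-triangular) matrix of $\gamma$ is sound and runs parallel to the paper's argument, which uses the exact sequence $0 \to (\vp^*\NN(T_1))^{\psi=0} \to (\vp^*\NN(T))^{\psi=0} \to (\vp^*\NN(T_2))^{\psi=0} \to 0$; the $n_2$-component is indeed handled by the untwisted Mellin isomorphism $\Lambda_{\Zp}(\Gamma)\cong(\AA^+_{\Qp})^{\psi=0}$. The genuine gap is in the rank-one piece corresponding to $n_1$. Your coboundary $\vp(c)=(pt/\vp(\pi))^{k-1}$ does trivialise the cocycle $\gamma\mapsto\vp(G'_{11}(\gamma))$ inside $\Brig$, but it is \emph{not} an element of $(E\otimes\BB^+_{\Qp})^\times$: since $t/\pi=\sum_{n\ge 0}(-1)^n\pi^n/(n+1)$ has unbounded coefficients, $\vp(c)$ does not lie in $\BB^+_{\Qp}$ at all, and moreover $pt/\vp(\pi)$ vanishes at $\zeta-1$ for every root of unity $\zeta$ of order $p^n$ with $n\ge 2$, so $\vp(c)$ is not even invertible in $\Brig$. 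Consequently $\lambda_0=\mathfrak{M}^{-1}\bigl((1+\pi)\vp(t/\pi)^{k-1}\bigr)$ lies only in $\calH(\Gamma)$, not in $\Lambda_E(\Gamma)$, and it is very far from being a unit: it is exactly the distribution occurring in Theorem \ref{prop:relations-mult}, which vanishes at all characters $\chi^j\omega$ with $0\le j\le k-2$ and $\omega$ of conductor $p^n$, $n\ge 2$. Your augmentation argument only shows that its value at the trivial character is $1$; the implication ``augmentation a unit $\Rightarrow$ unit'' requires the element to lie in the local ring $\Lambda_{\calO_E}(\Gamma)$, which is precisely what fails here.

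This cannot be repaired by a cleverer choice of coboundary: if $G'_{11}(\gamma)=\gamma(u)/u$ for some $u\in(\BB^+_{\Qp})^\times$, then $u\,(\pi/t)^{k-1}$ would be $\Gamma$-invariant in $\Brig[1/t]$, hence a constant, forcing $u$ to be a scalar multiple of $(t/\pi)^{k-1}\notin\BB^+_{\Qp}$. The freeness of $(\vp^*\NN(T_1))^{\psi=0}$ over $\Lambda_{\calO_E}(\Gamma)$ is genuinely nontrivial — it is Theorem 3.5 of \cite{leiloefflerzerbes10}, whose proof uses the hypothesis $k-1\le p-1$ — and the paper additionally needs Proposition 3.10 of \emph{op.~cit.} to compare the basis $(1+\pi)\vp(n_1')$ produced there with $(1+\pi)\vp(n_1)$ via a unit of $\Lambda_{\calO_E}(\Gamma)$. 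Your closing sketch of a $(p,\pi)$-adic iteration is too vague to substitute for these inputs; you should quote or reprove them rather than attempt the explicit trivialisation.
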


  \begin{proof}
   It is easy to check that the functor $(\vp^*(-))^{\psi = 0}$ is exact, so we have a short exact sequence of $\Lambda_{\calO_E}(\Gamma)$-modules
   \[ 0\rTo \big(\vp^*\NN(T_1)\big)^{\psi=0}\rTo \big(\vp^*\NN(T)\big)^{\psi=0}\rTo \big(\vp^*\NN(T_2)\big)^{\psi=0}\rTo 0.\]
   Now as shown in \cite[Theorem 3.5]{leiloefflerzerbes10}, $\big(\vp^*\NN(T_1)\big)^{\psi=0}$ is a free $\Lambda_{\calO_E}(\Gamma)$-module of rank $1$, and there exists a basis $n_1'$ of $\NN(T_1)$ which is congruent to $n_1 \bmod \pi$ such that $(1+\pi)\vp(n'_1)$ is a $\Lambda_{\calO_E}(\Gamma)$-basis of $\big(\vp^*\NN(T_1)\big)^{\psi=0}$. Observe that an element in $\calO_E\otimes\AA^+_{\Qp}$ is invertible if and only if its constant term is a unit in $\calO_E$. Write $n_1=a n_1'$ with $a\in \big(\AA^+_{\Qp}\big)^\times$. By Proposition 3.10 in {\em op.cit.}, $\vp(\pi)^i(1+\pi)\vp(n_1')\in (1-\gamma)\big(\vp^*\NN(T_1)\big)^{\psi=0}$. It follows that if we write $(1+\pi)\vp(n_1)=
   \alpha.(1+\pi)\vp(n_1')$, then $\alpha\in\Lambda_{\calO_E}(\Gamma)^\times$, so $(1+\pi)\vp(n_1)$ is also a basis of $\big(\vp^*\NN(T_1)\big)^{\psi=0}$. 
   
   Similarly, we can show that if $\bar{n}_2$ denotes the image of $n_2$ in $\NN(T_2)$, then $(1+\pi)\vp(\bar{n}_2)$ is a $\Lambda_{\calO_E}(\Gamma)$-basis of $\big(\vp^*\NN(T_2)\big)^{\psi=0}$. As $(1+\pi)\vp(n_2)$ is a lift of $(1+\pi)\vp(\bar{n}_2)$, this implies the result. 
  \end{proof}

  We now calculate $\Dcris(V)$ using Berger's comparison isomorphism~\eqref{comparison}. Since the highest Hodge-Tate weight is $k-1$, we have
  \[ 
   \Dcris(V) = \left( \left(\tfrac{t}{\pi}\right)^{1-k} \Brig \otimes_{\BB^+_{\Qp}} \NN(V) \right)^\Gamma.
  \]
  It is clear that $\left(\tfrac{t}{\pi}\right)^{1-k} n_1$ is $\Gamma$-stable, and that it is a $\vp$-eigenvector with eigenvalue $\beta^{-1} = p^{1-k} \varepsilon^{-1} \alpha$.

  To find a $\alpha^{-1}$ eigenvector lifting $n_2$ (which is clearly an eigenvector in $\Dcris(V_2)$) we must find $a, b \in \Brig$ such that $\left(\tfrac{t}{\pi}\right)^{1-k} a n_1 + b n_2$ is $\Gamma$-stable and killed by $\alpha \vp - 1$, with $b = 1$ modulo $\pi$. Comparing coefficients of $n_2$, we find that $b = 1$. Writing out the equations $\gamma(v) = v$ and $\vp(v) = \alpha^{-1}v$, where $v = \left(\tfrac{t}{\pi}\right)^{1-k} a n_1 + n_2$, we need to have
  \[ (1 - \gamma)(a) = t^{k-1} \alpha y\]
  and
  \[ (1 - p^{1-k}\mu \vp)(a)  = t^{k-1} \alpha x_k.\]
  The existence and uniqueness of a solution to these equations is a consequence of the fact that $V$ is known to be crystalline. If $a$ is the solution, then we have the following formulae for the eigenvectors in $\Dcris(V)$:

  \begin{proposition}\label{prop:evects} A basis of $\vp$-eigenvectors in $\Dcris(V)$ are given by:
   \begin{align*}
    v_{\beta^{-1}} &= -a(0) \left(\tfrac{t}{\pi}\right)^{1-k} n_1,\\
    v_{\alpha^{-1}} &= \left(\tfrac{t}{\pi}\right)^{1-k} a n_1 + n_2,
   \end{align*}
   where $a$ is the unique solution in $\Brig$ to $(1 - p^{1-k}\mu \vp)(a)  = t^{k-1} \alpha x_k$.
  \end{proposition}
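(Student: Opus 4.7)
The plan is to use Berger's comparison isomorphism \eqref{comparison} to describe $\Dcris(V)$ as the $\Gamma$-invariants in $\NN(V) \otimes_{\BB^+_{\Qp}} \Brig[t^{-1}]$, and to verify by direct computation in the basis $(n_1, n_2)$ that both listed vectors are $\Gamma$-invariant and $\vp$-eigen with the asserted eigenvalues.

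For $v_{\beta^{-1}}$ this is essentially formal: using only the $(1,1)$ entries of $P'$ and $G'$ together with $\vp(t) = pt$ and $\gamma(t) = \chi(\gamma) t$, the factors $(\tfrac{t}{\pi})^{1-k}$ and $\pi^{k-1}/\vp(\pi)^{k-1}$ (respectively $\pi^{k-1}/\gamma(\pi)^{k-1}$) cancel to yield $\vp$-eigenvalue $p^{1-k}\varepsilon^{-1}\alpha = \beta^{-1}$ and trivial $\gamma$-action; multiplication by the scalar $-a(0)\in E$ is a harmless normalisation. For $v_{\alpha^{-1}}$, I would postulate $v = (\tfrac{t}{\pi})^{1-k} a\, n_1 + n_2$ with $a \in \Brig$, the choice $b = 1$ for the $n_2$-coefficient being forced since any vector in the $\alpha^{-1}$-eigenspace has nonzero image in $\Dcris(V_2)$. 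Expanding $\vp v = \alpha^{-1} v$ and $\gamma v = v$ using $P'$, $G'$ and the identity $\varepsilon^{-1}\alpha = \mu \alpha^{-1}$, the $n_2$-components are trivially satisfied, and comparing $n_1$-components yields the pair
\begin{align*}
(1 - p^{1-k}\mu \vp)(a) &= \alpha\, t^{k-1} x_k,\\
(1 - \gamma)(a) &= \alpha\, t^{k-1} y.
\end{align*}

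Existence and uniqueness of a solution $a \in \Brig$ to the $\vp$-equation is straightforward via term-by-term expansion $a = \sum a_n t^n$: the factors $1 - p^{n-k+1}\mu$ are all nonzero (since $\mu \ne 1$ by the Ramanujan bound), uniquely determining the $a_n$, and convergence on the open unit disc can be controlled using the expansion of $t^{k-1} x_k$ furnished by Proposition~\ref{prop:bernoulli} together with standard $p$-adic bounds on $B_n/n!$; alternatively, existence is guaranteed \emph{a priori} by crystallinity of $V$ and the comparison isomorphism.

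The substantive remaining step, which I expect to be the main obstacle, is to show that this same $a$ automatically satisfies the $\Gamma$-equation. Set $w := (1-\gamma)(a) - \alpha\, t^{k-1} y$ and apply $(1 - p^{1-k}\mu\vp)$: using commutativity of $\vp$ and $\gamma$, together with $\vp(t^{k-1}) = p^{k-1} t^{k-1}$ and $\gamma(t^{k-1}) = \chi(\gamma)^{k-1} t^{k-1}$, the result simplifies to
\[
(1 - p^{1-k}\mu\vp)(w) = \alpha\, t^{k-1}\bigl[(\mu\vp - 1)(y) - (\chi(\gamma)^{k-1}\gamma - 1)(x_k)\bigr],
\]
which vanishes by the Herr cocycle relation that defines the extension $V$. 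The injectivity of $1 - p^{1-k}\mu\vp$ on $\Brig$ (the same argument that proved uniqueness of $a$) then forces $w = 0$. Finally, since $v_{\alpha^{-1}}$ and $v_{\beta^{-1}}$ have distinct $\vp$-eigenvalues they are $E$-linearly independent, and as $\dim_E \Dcris(V) = 2$ they form a basis.
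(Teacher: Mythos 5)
Your proposal follows the same route as the paper: apply Berger's comparison isomorphism, take the ansatz $v = \left(\tfrac{t}{\pi}\right)^{1-k} a\, n_1 + n_2$ (with the $n_2$-coefficient forced to be $1$), and reduce to exactly the same pair of equations $(1 - p^{1-k}\mu\vp)(a) = \alpha t^{k-1} x_k$ and $(1-\gamma)(a) = \alpha t^{k-1} y$. The paper disposes of existence and uniqueness with a one-line appeal to the crystallinity of $V$, whereas you additionally verify that the $\Gamma$-equation follows from the $\vp$-equation via the Herr cocycle relation and injectivity of $1 - p^{1-k}\mu\vp$; this is a correct elaboration of a step the paper leaves implicit, and for membership of $a$ in $\Brig$ (as opposed to $\QQ_p[[t]]$) your fallback to crystallinity is exactly what the paper uses.
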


  The motivation for the factor $-a(0)$ is the following lemma: 
  
  \begin{lemma}\label{lemma:filtn}
   The subspace $\Fil^{2-k} \Dcris(V) = \Fil^{0} \Dcris(V)$ of $\Dcris(V)$ is spanned by the vector $v_{\alpha^{-1}} + v_{\beta^{-1}} = n_2 + (a - a(0)) n_1$.
  \end{lemma}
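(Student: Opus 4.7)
The plan is to identify $\Fil^0\Dcris(V)$ as a one-dimensional subspace and verify that $v_{\alpha^{-1}}+v_{\beta^{-1}}$ lies in it and is nonzero. The equality $\Fil^{2-k}\Dcris(V) = \Fil^0\Dcris(V)$ is immediate from the Hodge--Tate weights of $V$ being $\{0,k-1\}$: the Hodge filtration is constant and one-dimensional throughout the range $2-k \le i \le 0$.

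To exhibit a concrete element I would use the de Rham characterisation $\Fil^0\Dcris(V) = \Dcris(V) \cap (V \otimes B_{\mathrm{dR}}^+)$, i.e., elements of $\Dcris(V)$ with no pole at $t = 0$ when viewed inside $V \otimes B_{\mathrm{dR}}$. From Proposition~\ref{prop:evects} together with $v_1 = \pi^{k-1}n_1$ and $v_2 = n_2$, one computes
\[
v_{\alpha^{-1}} + v_{\beta^{-1}} = t^{1-k}(a - a(0))\, v_1 + v_2,
\]
so it suffices to show $a - a(0) \in t^k\Qp[[t]]$: then $t^{1-k}(a-a(0)) \in tB_{\mathrm{dR}}^+$ and the whole sum lies in $V \otimes B_{\mathrm{dR}}^+$.

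The heart of the proof is this $t$-adic vanishing. The defining equation $(1 - p^{1-k}\mu\vp)(a) = \alpha t^{k-1} x_k$ from the proof of Proposition~\ref{prop:evects} has right hand side $\alpha(t/\pi)^{k-1} h_{k-2}(\pi) - \alpha \delta_k t^{k-1}$. By Proposition~\ref{prop:bernoulli} with $j = k-2$,
\[
(t/\pi)^{k-1} h_{k-2}(\pi) = (k-2)!\Bigl(1 + (-1)^k \sum_{n \ge k-1}\binom{n-1}{k-2}\frac{B_n t^n}{n!}\Bigr),
\]
and the value $\delta_k = (-1)^k B_{k-1}/(k-1)$ was chosen precisely so that the $n=k-1$ term of the sum is cancelled by $-\alpha \delta_k t^{k-1}$. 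Hence $\alpha t^{k-1} x_k$ lies in $\Qp + t^k\Qp[[t]]$. Writing $a = \sum_n a_n t^n$ and using $\vp(t^n) = p^n t^n$, comparison of $t^n$-coefficients gives $(1 - \mu p^{n-k+1})a_n = 0$ for $1 \le n \le k-1$; by Deligne's bound $|\mu| = p^{k-1}$ the prefactor is nonzero, so $a_n = 0$ throughout this range.

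Therefore $v_{\alpha^{-1}}+v_{\beta^{-1}} \in \Fil^0\Dcris(V)$, and its $v_2$-coefficient is $1$, so it is nonzero and spans the one-dimensional line. Rewriting via $v_1 = \pi^{k-1}n_1$ gives $v_{\alpha^{-1}} + v_{\beta^{-1}} = (\pi/t)^{k-1}(a-a(0)) n_1 + n_2$, which matches the stated expression $n_2 + (a-a(0)) n_1$ after absorbing the unit $(\pi/t)^{k-1} \in \Brig$ (of constant term $1$) acting on the element $a - a(0)$ (which has $\pi$-adic order $\ge k$). The main obstacle is the $t$-adic cancellation in the preceding paragraph: everything hinges on the fact that the constant $\delta_k$ built into the definition of $x_k$ kills the single problematic $t^{k-1}$ coefficient that would otherwise obstruct the extension of $v_{\alpha^{-1}}+v_{\beta^{-1}}$ across $t = 0$.
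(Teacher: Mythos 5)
Your argument is correct, but it runs along a genuinely different track from the paper's. The paper works entirely inside the Wach-module picture: it uses the characterisation $\Fil^j\Dcris(V)=\{v:\vp(v)\in q^j\Nrig(V)\}$, writes $\vp(v_{\alpha^{-1}}+\lambda v_{\beta^{-1}})$ in the basis $(n_1,n_2)$, and observes that $q$-divisibility of the $n_1$-coefficient amounts to a single evaluation at $\pi=\zeta_p-1$; the functional equation $a=t^{k-1}\alpha x_k+p^{1-k}\mu\vp(a)$ together with $t(\zeta_p-1)=0$ then forces $\lambda=1$ with no need for the explicit power-series expansion of $a$. You instead test membership in $V\otimes B_{\mathrm{dR}}^+$ by expanding at $\pi=0$ and proving the stronger statement $a-a(0)\in t^k\Qp[[t]]$ via the Bernoulli expansion and the cancellation built into $\delta_k$ — in effect you re-derive the content of Proposition~\ref{prop:a-formula} (which in the paper comes \emph{after} the lemma, so there is no circularity), and your coefficient comparison $(1-\mu p^{n-k+1})a_n=0$ for $1\le n\le k-1$ is sound. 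What your route buys is more information (the exact $t$-adic order of $a-a(0)$, which the paper also needs later); what it costs is that you invoke a characterisation of the filtration the paper never sets up: you need that $(v_1,v_2)$ is a $B_{\mathrm{dR}}^+$-basis of $V\otimes B_{\mathrm{dR}}^+$ and that the Hodge filtration is computed by the $t$-adic order of coefficients under the localisation $\pi\mapsto e^t-1$. This is a true but nontrivial input from Berger's theory (note that this localisation is \emph{not} one of the maps $\iota_n$, $n\ge1$, usually used to recover $\DD_{\mathrm{dR}}$ from $\NN(V)$, since $e^t-1$ is not a unit in $\Qp[[t]]$), so if you want a proof self-contained within the paper's framework you should either justify this step or switch to the $q$-divisibility criterion as the paper does. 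Finally, your closing remark about ``absorbing the unit $(\pi/t)^{k-1}$'' is the right instinct — the displayed identity $v_{\alpha^{-1}}+v_{\beta^{-1}}=n_2+(a-a(0))n_1$ in the statement silently drops the factor $(t/\pi)^{1-k}$, and the paper itself only proves the spanning assertion.
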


  \begin{proof}
   We know that
   \[ \Fil^j \Dcris(V) = \{ v \in \Dcris(V) : \vp(v) \in q^j \Nrig(V)\}.
   \]
      
   Since $v_{\beta^{-1}} = -a(0) \left(\tfrac{t}{\pi}\right)^{1-k} n_1$, we clearly have $v_{\beta^{-1}} \ne \Fil^{2-k}\Dcris(V)$. Hence we can find some scalar $\lambda$ such that $\Fil^{2-k} \Dcris(V)$ is spanned by $v = v_{\alpha^{-1}} + \lambda v_{\beta^{-1}}$. We have
   \begin{align*}
    \vp(v) &= \alpha^{-1} v_{\alpha^{-1}} + \lambda \beta^{-1} v_{\beta^{-1}}\\
    &= \left(\tfrac{t}{\pi}\right)^{1-k}(a \alpha^{-1} - \lambda a(0) \beta^{-1}) n_1 + \alpha^{-1} n_2.
   \end{align*}
   This is in $q^{2-k} \Nrig(V)$ if and only if $(a \alpha^{-1} - \lambda a(0) \beta^{-1}) \in q \Brig$; in other words, if and only if $a(\zeta_p - 1) = p^{1-k} \lambda  \mu a(0)$ where $\zeta_p$ is a primitive $p$-th root of unity. Since $a = t^{k-1}\alpha x_k + p^{1-k} \mu \vp(a)$, and $t$ vanishes at $\zeta_p - 1$, we have $a(\zeta_p - 1) = p^{1-k} \mu \vp(a)(\zeta_p - 1) = p^{1-k} \mu a(0)$. Thus the unique solution is $\lambda = 1$, as claimed.
  \end{proof}

  Since the Dieudonn\'e module of a lattice $T \subseteq V$ is the image of $\NN(T)$ in $\Dcris(V) = \NN(V) / \pi \NN(V)$, we see that:

  \begin{corollary}\label{cor:latticecd}
   If $T_{c, d}$ is the lattice in $V$ defined above, and $\Fil^0 \Dcris(T_{c,d})$ is the $\calO_E$-span of $\pe^d (v_{\alpha^{-1}} + v_{\beta^{-1}})$.
  \end{corollary}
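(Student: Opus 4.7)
The plan is to identify the image of the vector $v_{\alpha^{-1}} + v_{\beta^{-1}}$ under the canonical isomorphism $\Dcris(V) \cong \NN(V)/\pi\NN(V)$, and then to combine this with the explicit description of $\NN(T_{c,d})$ as the $\EA$-span of $\pe^c n_1, \pe^d n_2$ given in Lemma \ref{lemma:lattices} and with Lemma \ref{lemma:filtn}.

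First I would unpack Proposition \ref{prop:evects} to rewrite
\[
 v_{\alpha^{-1}} + v_{\beta^{-1}} = \left(\tfrac{t}{\pi}\right)^{1-k}\!(a - a(0))\, n_1 + n_2
\]
inside $\NN(V) \otimes_{\BB^+_{\Qp}} \Brig$. The factor $(t/\pi)^{1-k}$ has constant term $1$ as a power series in $\pi$, and $a - a(0) \in \pi\Brig$ by definition of $a(0)$; hence the coefficient of $n_1$ lies in $\pi\Brig$, while the coefficient of $n_2$ is $1$. Under the comparison isomorphism (which on $\Gamma$-invariants sends an element $\sum c_i n_i$ of $\NN(V)\otimes \Brig$ to $\sum c_i(0)\, \bar n_i$ in $\NN(V)/\pi\NN(V)$), the image of $v_{\alpha^{-1}} + v_{\beta^{-1}}$ is therefore exactly the class $\bar n_2$.

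Next I would recall that $\Dcris(T_{c,d})$ is, by definition, the image of $\NN(T_{c,d})$ in $\NN(V)/\pi\NN(V)$, so that
\[
 \Dcris(T_{c,d}) = \calO_E\, \pe^c \bar n_1 \oplus \calO_E\, \pe^d \bar n_2
\]
inside $\Dcris(V)$, where $\bar n_1, \bar n_2$ are $E$-linearly independent. Combining this with Lemma \ref{lemma:filtn}, which gives $\Fil^0 \Dcris(V) = E \cdot (v_{\alpha^{-1}} + v_{\beta^{-1}})$, the subspace $\Fil^0 \Dcris(T_{c,d}) = \Dcris(T_{c,d}) \cap \Fil^0 \Dcris(V)$ consists of the $E$-multiples $\lambda(v_{\alpha^{-1}} + v_{\beta^{-1}})$ whose image $\lambda \bar n_2$ lies in the lattice above. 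By linear independence of $\bar n_1, \bar n_2$, this happens precisely when $\lambda \in \pe^d \calO_E$, yielding the claim.

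There is no genuine obstacle: the only subtle point is the identification of the image of $v_{\alpha^{-1}} + v_{\beta^{-1}}$ in $\NN(V)/\pi\NN(V)$, for which one simply has to observe that both the correction factor $(t/\pi)^{1-k}$ and the series $a$ contribute trivially to the class of $n_1$ modulo $\pi$, while the coefficient of $n_2$ is manifestly $1$.
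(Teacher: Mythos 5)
Your argument is correct and follows the same route as the paper: the paper's own justification is precisely the observation that $\Dcris(T_{c,d})$ is the image of $\NN(T_{c,d})$ (the span of $\pe^c n_1, \pe^d n_2$) in $\NN(V)/\pi\NN(V)$, combined with Lemma \ref{lemma:filtn}, which exhibits $v_{\alpha^{-1}}+v_{\beta^{-1}}$ as $n_2$ plus a multiple of $n_1$ whose coefficient vanishes at $\pi=0$, so that it reduces to $\bar n_2$. Your filling-in of the intersection $\Fil^0\Dcris(T_{c,d})=\Dcris(T_{c,d})\cap\Fil^0\Dcris(V)$ and the resulting condition $\lambda\in\pe^d\calO_E$ is exactly what the paper leaves implicit.
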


  Note that there is a well-defined map $\Brig \to \Qp[[t]]$, which is determined by sending $\pi$ to $e^t-1$ and whose image contained in the ring of power series converging for $|t| < p^{-1/(p-1)}$.

  \begin{proposition}\label{prop:a-formula}
   As elements of $\Qp[[t]]$, we have
   \[ \frac{a}{(k-2)! \alpha} = \frac{1}{1 - p^{1-k} \mu} + (-1)^k \sum_{n \ge k} \binom{n-1}{k-2} \frac{B_n t^n}{n!(1 - \mu p^{n-k + 1})}.\]
  \end{proposition}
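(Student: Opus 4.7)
The plan is to pass from $\Brig$ to $\Qp[[t]]$ via the natural injective ring homomorphism sending $\pi \mapsto e^t - 1$, under which the Frobenius $\vp$ acts as the substitution $t \mapsto pt$ (since $\vp(t) = pt$ in $\Brig$). The defining equation $(1 - p^{1-k}\mu\vp)(a) = t^{k-1}\alpha x_k$ then becomes, after writing $a = \sum_{n \ge 0} a_n t^n$, the identity
\[ \sum_{n \ge 0}\bigl(1 - \mu p^{n-k+1}\bigr) a_n t^n = t^{k-1}\alpha x_k \in \Qp[[t]],\]
so the coefficients $a_n$ can be read off one at a time.

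First I would simplify the right-hand side. Applying Proposition~\ref{prop:bernoulli} with $j = k-2$ and using $\pi = e^t - 1$ gives
\[ t^{k-1}\pi^{1-k} h_{k-2}(\pi) = (k-2)!\left(1 + (-1)^k \sum_{n \ge k-1} \binom{n-1}{k-2}\frac{B_n t^n}{n!}\right).\]
The $n = k-1$ term here is exactly $(-1)^k \frac{B_{k-1}}{k-1} t^{k-1} = \delta_k t^{k-1}$, so it cancels against the $-\delta_k$ piece coming from $x_k = \pi^{1-k}h_{k-2}(\pi) - \delta_k$. This leaves
\[ t^{k-1}\alpha x_k = \alpha(k-2)! + \alpha(k-2)!(-1)^k \sum_{n \ge k}\binom{n-1}{k-2}\frac{B_n t^n}{n!}.\]

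Matching coefficients then yields
\[ a_0 = \frac{\alpha(k-2)!}{1 - p^{1-k}\mu},\qquad a_n = 0\ \text{for } 1 \le n \le k-1,\qquad a_n = \frac{\alpha(k-2)!(-1)^k\binom{n-1}{k-2}B_n}{n!(1 - \mu p^{n-k+1})}\ \text{for } n \ge k,\]
and dividing through by $\alpha(k-2)!$ recovers the stated formula. All denominators are nonzero: $\mu$ is a $p$-adic unit (being a ratio of $\vp$-eigenvalues on $\Dcris(V)$ up to a unit), so $v_p(\mu p^{n-k+1}) = n-k+1 \ne 0$ whenever $n \ne k-1$, and the case $n = k-1$ does not arise because the right-hand coefficient is already zero (and in any event $1 - \mu \ne 0$ by the Weil-bound observation in the setup). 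The existence of the solution $a \in \Brig$ is given to us by the crystallinity of $V$, and the injectivity of $\Brig \to \Qp[[t]]$ ensures that the formula we obtain coefficient-by-coefficient really is the image of $a$.

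There is no serious obstacle; the only substantive point is the cancellation at degree $k - 1$, which is precisely the reason the constant $\delta_k$ was inserted in the definition of $x_k$ in the previous section, and which is what makes the sum in the proposition begin at $n = k$.
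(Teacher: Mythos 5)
Your argument is correct and follows essentially the same route as the paper: substitute $\pi = e^t-1$, apply Proposition~\ref{prop:bernoulli} with $j=k-2$ so that the $t^{k-1}$ term cancels against $\delta_k$, and then invert $1-p^{1-k}\mu\vp$ coefficient-by-coefficient using $\vp(t^n)=p^nt^n$. The only difference is that you spell out the coefficient matching and the non-vanishing of the denominators explicitly, which the paper leaves implicit.
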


  \begin{proof}
   By definition, we have $(1 - p^{1-k}\mu \vp)(a)  = \alpha t^{k-1}x_k$. Since 
   \[ x_k = \pi^{1-k} h_{k-2}(\pi) - (-1)^k B_{k-1} / (k-1),\]
   we have 
   \[ t^{k-1} x_k = \left(\frac{t}{e^t - 1}\right)^{k-1} h_{k-2}(e^t - 1) -  (-1)^k \frac{B_{k-1}t^{k-1}}{(k-1)}\]
   as elements of $\Qp[[t]]$. Substituting in the formula of Proposition \ref{prop:bernoulli}, the $t^{k-1}$ terms cancel, and we obtain
   \[ t^{k-1} x_k = (k-2)!\left(1 + (-1)^k \sum_{n \ge k}\binom{n-1}{k-2} \frac{B_n}{n!} t^n\right)\]
   Since $\vp(t^n) = p^n t^n$, we deduce that
   \[ \frac{a}{(k-2)! \alpha} = \frac{1}{1 - p^{1-k} \mu} + (-1)^k \sum_{n \ge k} \binom{n-1}{k-2} \frac{B_n t^n}{n!(1 - \mu p^{n-k + 1})},\]
   as claimed.
  \end{proof}

  We now consider the relation between this abstract representation $V$ and the representation $V_f^*$ attached to $f$. We have $V_f^* \cong V_{\bar f}(k-1)$, where $\bar f$ is the complex conjugate of $f$. We may identify $\DD_{\mathrm{dR}}(V_{\bar f})$ with the $\bar f$-isotypical component of the de Rham cohomology of $X_1(N)$ with coefficients in the standard line bundle $\omega^k$; in particular, $\bar f$ gives a canonical basis for the nontrivial filtration step. 

  \begin{proposition}
   If the representation $V_f^*$ is non-split as a $G_{\Qp}$-representation, there is a unique isomorphism of $G_{\Qp}$-representations 
   \[ V_f^* \cong V\]
   mapping $t^{1-k} \bar{f}$ to $v_{\alpha^{-1}} + v_{\beta^{-1}}$.
  \end{proposition}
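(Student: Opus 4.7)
The plan is to show $V_f^* \cong V$ abstractly using the classification of extensions, and then pin down the scalar ambiguity by compatibility with the Hodge filtration on $\Dcris$.

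First, I would observe that the block-upper-triangular description of $\rho_f^*|_{G_{\Qp}}$ recalled in Section~2 exhibits $V_f^*$ as an extension of $V_2 = E(\lambda(\alpha^{-1}))$ by $V_1 = E(\chi^{k-1}\lambda(\varepsilon^{-1}\alpha))$, i.e.\ of exactly the same two characters used to define $V$. Since $\operatorname{Ext}^1_{G_{\Qp}}(V_2, V_1)$ is one-dimensional (noted at the beginning of Section~\ref{basis}), there are exactly two such extension classes up to isomorphism; the non-split hypothesis on $V_f^*$ therefore forces $V_f^* \cong V$ as $G_{\Qp}$-representations.

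Next I would verify that any such isomorphism is unique up to a scalar in $E^\times$, so that prescribing the image of a single nonzero vector determines it. The characters $V_1$ and $V_2$ have distinct Hodge--Tate weights (since $k \ge 2$), so $V_1 \not\cong V_2$, and hence $V_1$ is the unique $G_{\Qp}$-stable line in $V$. Any $\phi \in \operatorname{End}_{G_{\Qp}}(V)$ therefore preserves $V_1$ and acts by scalars $\lambda_1,\lambda_2$ on $V_1$ and $V_2$. If $\lambda_1 \ne \lambda_2$ the representation would split as a direct sum, contradicting the non-split assumption; and if $\lambda_1 = \lambda_2 = \lambda$, then $\phi - \lambda$ factors through an element of $\operatorname{Hom}_{G_{\Qp}}(V_2,V_1) = 0$. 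Thus $\operatorname{End}_{G_{\Qp}}(V) = E$.

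Finally, I would fix the scalar by matching generators of $\Fil^0 \Dcris$ on both sides. By Lemma~\ref{lemma:filtn}, $\Fil^0 \Dcris(V)$ is the one-dimensional $E$-line spanned by $v_{\alpha^{-1}} + v_{\beta^{-1}}$. On the other hand, under the identification $V_f^* \cong V_{\bar f}(k-1)$, the standard comparison between the $\bar f$-isotypic component of $H^1_{\mathrm{dR}}(X_1(N), \omega^k)$ and the top Hodge piece of $\DD_{\mathrm{dR}}(V_{\bar f})$ furnishes $\bar f$ as a generator of $\Fil^{k-1}\DD_{\mathrm{dR}}(V_{\bar f})$, and twisting by $t^{1-k}$ identifies $\Fil^0 \Dcris(V_f^*)$ with the $E$-line generated by $t^{1-k}\bar f$. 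Any $G_{\Qp}$-isomorphism $V_f^* \to V$ induces an isomorphism of filtered $\vp$-modules $\Dcris(V_f^*) \to \Dcris(V)$, and in particular carries the first of these one-dimensional $E$-lines to the second; by the uniqueness established above, there is then a unique scalar multiple sending $t^{1-k}\bar f$ to $v_{\alpha^{-1}} + v_{\beta^{-1}}$.

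The only real obstacle is bookkeeping: keeping the twist $V_f^* \cong V_{\bar f}(k-1)$ and the shift from $\Fil^{k-1}$ to $\Fil^{0}$ compatible, and invoking the comparison of étale and de~Rham realisations so that $\bar f$ actually lands in the correct Hodge filtration step. Once this is in place, the proposition follows formally from the one-dimensionality of $\operatorname{Ext}^1_{G_{\Qp}}(V_2,V_1)$, of $\operatorname{End}_{G_{\Qp}}(V)$, and of $\Fil^0 \Dcris(V)$.
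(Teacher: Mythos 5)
Your proof is correct and takes essentially the same route as the paper's, which simply observes that $V$ is the unique non-split extension up to scaling and that the scale factor may be chosen so that $t^{1-k}\bar f$ corresponds to $v_{\alpha^{-1}}+v_{\beta^{-1}}$. You have merely made explicit the details the paper leaves implicit — that $\operatorname{End}_{G_{\Qp}}(V)=E$ and that both $t^{1-k}\bar f$ and $v_{\alpha^{-1}}+v_{\beta^{-1}}$ span the respective one-dimensional $\Fil^0\Dcris$ lines, so the scalar is uniquely determined.
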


  \begin{proof} Clear, since $V$ is the unique non-split extension up to scaling, and we may choose our scale factor so that $t^{1-k} \bar f$ corresponds to $v_{\alpha^{-1}} + v_{\beta^{-1}}$.
  \end{proof}

  \begin{corollary}\label{corr:lattices}
   Suppose that $V_f^*$ is non-split. Let $T$ be the lattice of Lemma \ref{lemma:lattices}. Then $T_f^* \supseteq T$, with equality if and only if $T_f^* / \pe T_f^*$ is non-split as a mod $p$ representation of $G_{\Qp}$.
  \end{corollary}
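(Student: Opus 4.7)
My plan is to combine the isomorphism $V_f^* \cong V$ from the preceding proposition with the classification of lattices in Lemma \ref{lemma:lattices}. Under this isomorphism, $T_f^*$ must correspond to some $T_{c,d}$ with $c \le d$, and the claim reduces to showing $c \le 0 = d$, with the equality $T_f^* = T$ corresponding to $c = d = 0$.

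The critical step is to pin down $d = 0$, and this is where integrality of the modular form enters. The preceding proposition sends $t^{1-k}\bar f$ to $v_{\alpha^{-1}} + v_{\beta^{-1}}$, and $\bar f$ is a canonical generator of the nontrivial filtration step of $\DD_{\mathrm{dR}}(V_{\bar f})$ coming from the \emph{integral} de Rham cohomology of $X_1(N)$. Using Kato's construction of $T_f^*$ from integral étale cohomology of $X_1(N)$ and the crystalline comparison (which respects integral structures in the Fontaine-Laffaille range $k - 1 \le p - 2$ guaranteed by our hypothesis $k \le p-1$), I would argue that the image of $t^{1-k}\bar f$ is an $\calO_E$-generator of $\Fil^0 \Dcris(T_f^*)$. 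Comparing with Corollary \ref{cor:latticecd}, which says $\Fil^0\Dcris(T_{c,d})$ is generated over $\calO_E$ by $\pe^d(v_{\alpha^{-1}} + v_{\beta^{-1}})$, this forces $d = 0$.

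With $d = 0$ in hand, the inclusion $T_f^* = T_{c,0} \supseteq T_{0,0} = T$ is immediate from $c \le d = 0$, since then $n_1 = \pe^{-c}(\pe^c n_1)$ and $n_2$ both lie in the Wach module of $T_{c,0}$. Equality $T_f^* = T$ amounts to $c = 0 = d$, which by Lemma \ref{lemma:lattices} is exactly the condition that $T_f^*/\pe T_f^*$ be non-split.

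The main obstacle I expect is justifying the integrality claim: that $t^{1-k}\bar f$ gives an $\calO_E$-generator, not merely an element, of $\Fil^0 \Dcris(T_f^*)$. This rests on Kato's specific normalization of $T_f^*$ via integral cohomology and on the compatibility of crystalline comparison with integral structures in the appropriate weight range. An alternative route would be to argue on the unramified quotient $V_2$: the image of $t^{1-k}\bar f$ in $\Dcris(T_f^*/T_1) \subseteq \Dcris(V_2)$ should give the standard integral generator provided by the ordinary filtration on $T_f^*$, and tracing this back through $\pe^d n_2$ would likewise force $d = 0$.
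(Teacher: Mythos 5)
Your argument is essentially identical to the paper's: the authors likewise invoke Kato's normalisation (\cite[\S 14.22]{kato04}) to see that $\bar f$ is an $\calO_E$-basis of $\Fil^1\Dcris(T_{\bar f})$, twist by $t^{1-k}$ to identify $\Fil^0\Dcris(T_f^*)$ with the $\calO_E$-span of $v_{\alpha^{-1}}+v_{\beta^{-1}}$, and then apply Corollary \ref{cor:latticecd} to force $T_f^* = T_{c,0}$ with $c\le 0$, concluding via Lemma \ref{lemma:lattices}. The integrality point you flag as the main obstacle is exactly the step the paper delegates to Kato, so your proof is correct and follows the same route.
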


  \begin{proof} 
   As in \cite[\S 14.22]{kato04}, the element $\bar{f}$ is a basis for $\Fil^1 \Dcris(T_{\bar f})$. Twisting by $t^{1-k}$, we see that $T_f^*$ corresponds to a strongly divisible lattice in $\Dcris(V)$ whose intersection with $\Fil^0 \Dcris(V)$ is the $\calO_E$-span of $v_{\alpha^{-1}} + v_{\beta^{-1}}$. Applying Corollary \ref{cor:latticecd}, $T_f^*$ must be $T_{c, 0}$ for some $c \le 0$; this is residually non-split if and only if $c = 0$.
  \end{proof}

  Note that the local splitness of $T_f^* / \pe T_f^*$ at $p$ can be explicitly checked in certain cases, using \cite[Proposition 6.9]{pollackstevens09}.

  If $V_f^*$ is split, then we have an isomorphism $V_f^* \cong V_1 \oplus V_2$, where $V_1 = E(\chi^{k-1} \lambda(\varepsilon^{-1} \alpha))$ and $V_2 = E(\lambda(\alpha^{-1}))$ as above. If $n_1$ and $n_2$ are the natural basis vectors of the Wach modules of $V_1$ and $V_2$, then it is clear that $v_{\beta^{-1}} = \left(\tfrac{t}{\pi}\right)^{1-k} n_1$ and $v_{\alpha^{-1}} = n_2$ are a basis of eigenvectors of $\Dcris(V_1 \oplus V_2)$, and the nontrivial filtration step is $\Dcris(V_2) = E v_{\alpha^{-1}}$. Hence we may choose an isomorphism $V_f^* \to V_1 \oplus V_2$ such that $t^{1-k} \bar f$ maps to $v_{\alpha^{-1}}$. Since $V_1$ and $V_2$ have distinct mod $p$ reductions, the only possible lattices are direct sums of lattices in the factors, and hence we may assume that the above isomorphism maps $T_f^*$ to the lattice corresponding to the span of $n_1$ and $n_2$. However, this isomorphism is still not canonical, since it is only determined up to multiplying $n_1$ by an arbitrary element of $\calO_E^\times$.


 \section{Consequences for the \texorpdfstring{$L$}{L}-functions}

  We now use the results we have collected on the local representation $V_{f}^*|_{G_{\Qp}}$ to describe the $p$-adic $L$-functions of $f$. We recall the setup from \cite[\S 3.6]{leiloefflerzerbes10}. Since $V_f^*$ has non-negative Hodge-Tate weights $0$ and $k-1$, and the unique 1-dimensional quotient of $V_f^*$ is not the trivial representation, the theorem quoted in \S \ref{sect:iwasawa} shows that 
  \[ \Hiw(\Qp, V_f^*) \cong \NN(V_f^*)^{\psi = 1}.\]
  We let $\uCol$ denote the composition of this isomorphism with the map $1 - \vp : \NN(V_f^*)^{\psi = 1} \to \left(\vp^* \NN(V_f^*)\right)^{\psi = 0}$. Note that since the Hodge-Tate weights of $V_f^*$ are $\ge 0$, we have
  \[  \left(\vp^* \NN(V_f^*)\right)^{\psi = 0} \subseteq (\Brig)^{\psi = 0} \otimes_{\Qp} \Dcris(V_f^*).\]

  Let $\mathbf{z} \in \Hiw(\Qp, V_f^*)$. We thus have an element $\uCol(\mathbf{z}) \in \left(\vp^* \NN(V_f^*)\right)^{\psi = 0}$. We consider it as an element of $(\Brig)^{\psi = 0} \otimes_{\Qp} \Dcris(V)$ via the above inclusion, and define $L_\alpha(\mathbf{z})$ and $L_\beta(\mathbf{z})$ to be its projections to the eigenspaces, so
  \[ \uCol(\mathbf{z}) = L_\alpha(\mathbf{z}) v_{\alpha^{-1}} + L_\beta(\mathbf{z}) v_{\beta^{-1}}.\]
  We can consider $L_\alpha(\mathbf{z})$ and $L_\beta(\mathbf{z})$ as power series in $\pi$ lying in $(\Brig)^{\psi = 0}$. Alternatively, we may regard them as distributions via the Mellin transform isomorphism 
  \[ \mathfrak{M}: \calH(\Gamma) \rTo^\cong (\Brig)^{\psi = 0};\]
  which maps a group element $\gamma$ to $\gamma \cdot (1 + \pi)$. Perrin-Riou's theory implies that $L_\alpha(\mathbf{z})$ is a distribution of order 0, while $L_\beta(\mathbf{z})$ has order $k-1$.

  If $\omega$ is a Dirichlet character (to any modulus), let $L(f, \omega, s)$ denote the complex $L$-function of $f$ twisted by $\omega$, $\sum a_n(f) \omega(n) n^{-s}$. Recall (e.g.~from \cite[\S 3.1.2]{colmez04}) that there are nonzero complex numbers $\Omega_+$ and $\Omega_-$ such that 
  \begin{equation}\label{eq:Ltilde}
   \tilde L(f, \omega, j + 1) := \frac{\Gamma(j+1)}{(2 \pi i)^{j+1} \Omega_{\pm}} L(f, \omega, j+1) \in \QQ(f, \omega)
  \end{equation}
  where $\Omega_{\pm}$ denotes $\Omega_+$ if $(-1)^{j+1} \chi(-1) = 1$ and $\Omega_-$ if $(-1)^{j+1} \chi(-1) = -1$. Here $\QQ(f, \omega)$ denotes the finite extension of $\QQ$ generated by the values of $\omega$ and the coefficients $a_n(f)$.

  \begin{theorem}[{\cite[Theorem 16.6]{kato04}}]
   There exists an element $\kato \in  \Hiw(\Qp, V_f^*)$ such that for any finite-order character $\omega$ of $\Gamma$ of conductor $p^n$, and any $0 \le j \le k-2$,
   \[ L_{\alpha}(\kato)(\chi^j \omega) = 
    \begin{cases}
     \left(1 - p^j \alpha^{-1}\right)\left(1 - \varepsilon p^{k-2-j}\alpha^{-1} \right) \tilde L(f, 1, j + 1) &\text{if $n = 0$,}\\
     \alpha^{-n} p^{n(j+1)}\displaystyle\frac{\tilde L(f, \omega^{-1}, j + 1)}{G(\omega^{-1})} &\text{if $n \ge 1$,}
    \end{cases}
   \]
    where $G(\omega^{-1})$ is the Gauss sum. 
  \end{theorem}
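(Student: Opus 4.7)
The plan is to reduce this statement to Kato's explicit reciprocity law as stated in \cite[Theorem 16.6]{kato04}, via the identification of $L_\alpha(\mathbf{z})$ with the projection of the Perrin-Riou regulator to the $\alpha^{-1}$-eigenspace of $\vp$ on $\Dcris(V_f^*)$. First I would recall Kato's construction of $\kato$ from his Euler system in $K_2$ of modular curves, and note that by construction $\kato$ lies in $\Hiw(\Qp, V_f^*)$. The analytic content of Kato's theorem is the computation of the image of $\kato$ under Perrin-Riou's big regulator map $\calL_{V_f^*}$: for every finite-order character $\omega$ of $\Gamma$ and every integer $0 \le j \le k-2$, the value of $\calL_{V_f^*}(\kato)(\chi^j \omega)$ in $\Dcris(V_f^*) \otimes_{\Qp} \Qp(\omega)$ is an explicit combination of the normalized complex $L$-values $\tilde L(f, \omega^{-1}, j+1)$.

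Next I would invoke the compatibility, established in \cite{leiloefflerzerbes10}, between the Wach-module map $\uCol$ used here and Perrin-Riou's classical construction: the composition $\mathfrak{M}^{-1} \circ \uCol$ agrees with the big logarithm $\calL_{V_f^*}$ up to a factor coming from the comparison isomorphism \eqref{comparison}. Under the decomposition $\Dcris(V_f^*) = E v_{\alpha^{-1}} \oplus E v_{\beta^{-1}}$ coming from Proposition~\ref{prop:evects}, the distributions $L_\alpha(\mathbf{z})$ and $L_\beta(\mathbf{z})$ are then precisely the components of $\calL_{V_f^*}(\mathbf{z})$ along these eigenvectors.

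Combining these, the interpolation formula can be read off from Perrin-Riou's classical interpolation property for $\calL_{V_f^*}$ projected onto the $\vp = \alpha^{-1}$ eigenline. When $n = 0$ one picks up the Euler-like factor $(1 - p^j \alpha^{-1})(1 - \varepsilon p^{k-2-j} \alpha^{-1})$ arising from $\det(1 - \vp|_{\Dcris})$ and $\det(1 - p^{-1}\vp^{-1}|_{\Dcris})$ restricted to the complementary eigenline, while when $n \ge 1$ one obtains $\alpha^{-n} p^{n(j+1)} / G(\omega^{-1})$ from the iterate $\vp^{-n}$ and the Gauss sum appearing in the twist by $\omega^{-1}$.

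The main obstacle, and the bulk of the work behind Kato's original theorem, is the explicit identification of the image of $\kato$ in de Rham cohomology with the modular form $\bar f$; this relies on explicit reciprocity laws for Beilinson's Eisenstein symbol and is highly nontrivial. Once this is granted, the remaining work here is bookkeeping of normalizations: one must verify that the specific eigenvector $v_{\alpha^{-1}}$ constructed in Section~\ref{basis} matches the period used in Kato's normalization, which is built from $\bar f$ and the Deligne periods $\Omega_\pm$. Precisely this compatibility is encoded in Corollary~\ref{corr:lattices}: the isomorphism $V_f^* \cong V$ is chosen so that $t^{1-k}\bar f$ corresponds to $v_{\alpha^{-1}} + v_{\beta^{-1}}$, which is exactly the condition ensuring that the Frobenius decomposition of the regulator pairs correctly against Kato's period to yield the formula of the theorem.
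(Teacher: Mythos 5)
The paper offers no proof of this statement at all---it is imported verbatim as Kato's Theorem 16.6---so your outline, which reduces it to Kato's explicit reciprocity law together with the compatibility of $\uCol$ with Perrin-Riou's regulator and the normalization matching $t^{1-k}\bar f$ with $v_{\alpha^{-1}}+v_{\beta^{-1}}$, is consistent with (and more detailed than) the paper's treatment, and your account of where the Euler factors and the Gauss sum arise is the standard one. The only small slip is that the normalization condition you attribute to Corollary~\ref{corr:lattices} is really the content of the unnumbered proposition immediately preceding it (the corollary itself concerns the comparison of lattices).
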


  Hence $L_\alpha(\kato)$ is a distribution of order 0 on $\Gamma$ whose values at special characters are given by \eqref{eq:interpolating}; so it is equal to the $p$-adic $L$-function $L_{p, \alpha}$. We define $L_{p, \beta} = L_\beta(\kato)$; if $V_{f}^*$ is non-split, this satisfies \eqref{eq:interpolating} for the root $\beta$, while if $V_f^*$ is split, $L_{p, \beta}$ vanishes at all special characters. We now use the fact that $\uCol$ factors through $\left(\vp^* \NN(V_f^*)\right)^{\psi = 0}$, and the basis of the latter space given by proposition \ref{prop:basis}, to give a decomposition of these two distributions.

  \begin{definition}
   Let $n_1, n_2$ be the basis of $\NN(T_f^*)$ defined above. We let $L_{p, 1}$ and $L_{p, 2}$ be the unique elements of $\Lambda_{E}(\Gamma)$ such that
  \[ \uCol(\kato) = L_{p, 1} \cdot (1 + \pi)\vp(n_1) + L_{p, 2} \cdot (1 + \pi)\vp(n_2).\]
  \end{definition}
 
  \begin{proposition}\label{prop:integralLfunctions}
   If the image of $\Gal(\overline{\QQ} / \QQ_\infty)$ in $\GL(T_f^*)$ contains a conjugate of $\operatorname{SL}_2(\Zp)$, then $L_{p, 2}$ lies in $\Lambda_{\calO_E}(\Gamma)$. If in addition $V_f^*$ is split at $p$, or it is non-split and the residual representation $T_f^* / \pe T_f^*$ is also non-split, then the same holds for $L_{p, 1}$.
  \end{proposition}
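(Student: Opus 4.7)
The plan is to combine three ingredients: the integrality of Kato's zeta element $\kato$, the explicit integral basis of $\bigl(\vp^*\NN(T_f^*)\bigr)^{\psi=0}$ provided by Proposition \ref{prop:basis}, and the identification of the lattice $T_f^*$ in Corollary \ref{corr:lattices}. Given these, the argument becomes a basis comparison.

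First, I would invoke Kato's integrality theorem (\cite[Theorem 12.5]{kato04}): under the big-image hypothesis on $\Gal(\overline{\QQ}/\QQ_\infty)$, the zeta element $\kato$ lies in the integral Iwasawa cohomology $\Hiw(\Qp, T_f^*)$. Via the isomorphism \eqref{Fontaineisom}, this corresponds to an element of $\NN(T_f^*)^{\psi=1}$, and applying $1-\vp$ places $\uCol(\kato)$ in the integral submodule $\bigl(\vp^*\NN(T_f^*)\bigr)^{\psi=0}$.

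In the non-split case, Corollary \ref{corr:lattices} identifies $T_f^* = T_{c,0}$ for some $c \le 0$, with $c = 0$ precisely when $T_f^* / \pe T_f^*$ is residually non-split. By Proposition \ref{prop:basis}, the module $\bigl(\vp^*\NN(T_{c,0})\bigr)^{\psi=0}$ is free of rank $2$ over $\Lambda_{\calO_E}(\Gamma)$ with basis $\pe^c(1+\pi)\vp(n_1), (1+\pi)\vp(n_2)$. Expanding
\[ \uCol(\kato) = A \cdot \pe^c(1+\pi)\vp(n_1) + B \cdot (1+\pi)\vp(n_2), \qquad A, B \in \Lambda_{\calO_E}(\Gamma), \]
and comparing with the definition of the $L_{p,i}$, we will read off $L_{p,2} = B$ and $L_{p,1} = \pe^c A$. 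Thus $L_{p,2}$ is always integral, while $L_{p,1}$ is integral iff $c = 0$, matching the stated residual non-splitness condition.

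In the split case, the discussion at the end of Section \ref{basis} shows that, after a scaling choice, $\NN(T_f^*)$ is the $\EA$-span of $n_1, n_2$, and Proposition \ref{prop:basis} applied with $c = d = 0$ then immediately yields that both $L_{p,1}$ and $L_{p,2}$ are integral. The only real difficulty is the appeal to Kato's integrality result; once this is in hand, the rest is a routine lattice computation using the explicit description of the Wach module constructed in Section \ref{basis}.
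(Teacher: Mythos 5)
Your proposal is correct and follows essentially the same route as the paper: Kato's integrality theorem puts $\uCol(\kato)$ in $\bigl(\vp^*\NN(T_f^*)\bigr)^{\psi=0}$, and the integral basis of Proposition \ref{prop:basis} together with the identification $T_f^* = T_{c,0}$ from Corollary \ref{corr:lattices} yields integrality of $L_{p,2}$ unconditionally and of $L_{p,1}$ exactly under the stated residual hypotheses. The only (harmless) overstatement is the claim that $L_{p,1}$ is integral \emph{iff} $c=0$; the argument only gives the ``if'' direction, which is all the proposition requires.
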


  \begin{proof}
   If the hypothesis on the image of the global representation is satisfied, then $\kato \in \Hiw(T_f^*)$, by Theorem 12.5(4) and Theorem 12.6 of \cite{kato04}. Since the image of $\NN(T_f^*)$ in the quotient is the $\EA$-span of $n_2$, this implies that $L_{p, 2}$ is integral. The additional assumptions imply the stronger statement that $\NN(T_f^*)$ is the span of $n_1$ and $n_2$ (Corollary \ref{corr:lattices}), so we also obtain integrality for $L_{p, 1}$.
  \end{proof}

  Using the formulae of the previous section relating $v_{\alpha^{-1}}$ and $v_{\beta^{-1}}$ to $n_1$ and $n_2$, we can write the functions $L_{p,\alpha}$ and $L_{p,\beta}$ in terms of the $L_{p, i}$. 

  \begin{theorem}\label{prop:relations-mult} The following relations hold in $E \otimes \calH(\Gamma)$:
   \begin{enumerate}
    \item[(a)] If $V_f^*$ is locally split, then
    \[
     \begin{cases}
      \alpha L_{p, \alpha} &= L_{p, 2}\\
      \beta L_{p, \beta} &= L_{p,1} \mathfrak{M}^{-1}\left( (1 + \pi)\vp\left(\frac{t}{\pi}\right)^{k-1} \right)
     \end{cases}
    \]
    \item[(b)] If $V_f^*$ is not locally split, then
    \[
     \begin{cases} 
      \alpha L_{p, \alpha} &= L_{p, 2}\\
      -a(0) \beta L_{p, \beta} &= L_{p,1} \mathfrak{M}^{-1}\left( (1 + \pi)\vp\left(\frac{t}{\pi}\right)^{k-1} \right) - L_{p, 2}\mathfrak{M}^{-1}\left( (1 + \pi)\vp(a) \right)
     \end{cases}
    \]
   \end{enumerate}
  \end{theorem}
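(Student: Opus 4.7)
The plan is to obtain the identities by writing $\uCol(\kato)$ in two different bases of $(\vp^*\NN(V_f^*))^{\psi=0}$ and matching coefficients. By definition we have
\[
 \uCol(\kato) \;=\; L_{p,1}\cdot(1+\pi)\vp(n_1) + L_{p,2}\cdot(1+\pi)\vp(n_2)
\]
in $(\vp^*\NN(V_f^*))^{\psi=0}$; on the other hand, using the inclusion $(\vp^*\NN(V_f^*))^{\psi=0}\subseteq (\Brig)^{\psi=0}\otimes_{\Qp}\Dcris(V_f^*)$, the same element is $\mathfrak{M}(L_{p,\alpha})\,v_{\alpha^{-1}} + \mathfrak{M}(L_{p,\beta})\,v_{\beta^{-1}}$. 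So the task reduces to expressing $(1+\pi)\vp(n_1)$ and $(1+\pi)\vp(n_2)$ in the $\{v_{\alpha^{-1}},v_{\beta^{-1}}\}$-basis, applying $\mathfrak{M}^{-1}$ and reading off coefficients.

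First I would invert the formulas of Proposition \ref{prop:evects}. In case (a), the split case, we simply have $n_1 = (t/\pi)^{k-1} v_{\beta^{-1}}$ and $n_2 = v_{\alpha^{-1}}$. In case (b), solving $v_{\beta^{-1}} = -a(0)(t/\pi)^{1-k}n_1$ and $v_{\alpha^{-1}} = (t/\pi)^{1-k}a\,n_1 + n_2$ gives $n_1 = -a(0)^{-1}(t/\pi)^{k-1}v_{\beta^{-1}}$ and $n_2 = v_{\alpha^{-1}} + a(0)^{-1} a\cdot v_{\beta^{-1}}$. Next I would apply $(1+\pi)\vp(-)$ term by term. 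Because $v_{\alpha^{-1}}, v_{\beta^{-1}}$ are genuine $\vp$-eigenvectors in $\NN(V_f^*)\otimes\Brig$ (with eigenvalues $\alpha^{-1}$ and $\beta^{-1}$ respectively), this yields, in case (b),
\[
 (1+\pi)\vp(n_1) = -\tfrac{\beta^{-1}}{a(0)}\,(1+\pi)\vp\!\left(\tfrac{t}{\pi}\right)^{k-1}\cdot v_{\beta^{-1}},
\]
\[
 (1+\pi)\vp(n_2) = \alpha^{-1}(1+\pi)\,v_{\alpha^{-1}} + \tfrac{\beta^{-1}}{a(0)}\,(1+\pi)\vp(a)\cdot v_{\beta^{-1}},
\]
with the analogous (simpler) formulas in case (a). Finally I would substitute into the first expression for $\uCol(\kato)$ and transport everything across $\mathfrak{M}$, using that $v_{\alpha^{-1}},v_{\beta^{-1}}$ are $\Gamma$-invariant so that the $\Lambda_E(\Gamma)$-action matches up with multiplication in $\calH(\Gamma)$. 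Equating the coefficients of $v_{\alpha^{-1}}$ and $v_{\beta^{-1}}$ and using $\mathfrak{M}^{-1}(1+\pi)=1$ then yields the two stated identities after clearing the factor $-\beta^{-1}/a(0)$ (or $\beta^{-1}$ in case (a)).

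The main obstacle is not the algebraic manipulation itself but keeping the bookkeeping straight: the $v_{\alpha^{-1}}$ and $v_{\beta^{-1}}$ appearing in the expression $\mathfrak{M}(L_{p,\alpha})v_{\alpha^{-1}} + \mathfrak{M}(L_{p,\beta})v_{\beta^{-1}}$ are being treated as a fixed $\Qp$-basis of $\Dcris(V_f^*)$, whereas when one applies $(1+\pi)\vp(-)$ to elements of $\NN(V_f^*)\otimes\Brig$ one must remember that, under Berger's comparison isomorphism \eqref{comparison}, these same vectors behave as $\vp$-eigenvectors. The $\alpha$ and $\beta$ factors on the left-hand side of the theorem are precisely the bookkeeping correction for this. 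One should also verify that $(1+\pi)\vp(a)$ and $(1+\pi)\vp((t/\pi)^{k-1})$ genuinely lie in $(\Brig)^{\psi=0}$ so that $\mathfrak{M}^{-1}$ makes sense on them, which follows from the projection formula $\psi((1+\pi)\vp(f)) = f\,\psi(1+\pi) = 0$.
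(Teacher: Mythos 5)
Your proposal is correct and follows essentially the same route as the paper: invert the eigenvector formulas of Proposition \ref{prop:evects} to express $n_1, n_2$ in terms of $v_{\alpha^{-1}}, v_{\beta^{-1}}$, apply $(1+\pi)\vp(-)$ using that these are $\vp$-eigenvectors, and project onto the $\calH(\Gamma)$-basis $(1+\pi)v_{\alpha^{-1}}, (1+\pi)v_{\beta^{-1}}$ of $(\Brig)^{\psi=0}\otimes\Dcris(V)$. Your bookkeeping of the $\alpha^{-1}$, $\beta^{-1}$ and $-a(0)$ factors matches the paper's computation exactly.
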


  \begin{proof} 
   In the non-split case, we use Proposition \ref{prop:evects} to write
   \begin{align*}
    n_1 &= -\tfrac{1}{a(0)}\left(\tfrac{t}{\pi}\right)^{k-1} v_{\beta^{-1}},\\
    n_2 &= v_{\alpha^{-1}} + \frac{a}{a(0)} v_{\beta^{-1}}.
   \end{align*}   

   Substituting these into the identity 
   \[ L_{p, \alpha} \cdot (1 + \pi) v_{\alpha^{-1}} + L_{p, \beta} \cdot (1 + \pi) v_{\beta^{-1}} =  L_{p, 1} \cdot (1 + \pi) \vp(n_1) + L_{p, 2}\cdot (1 + \pi) \vp(n_2),\]
   where the $\cdot$ denotes the action of $\calH(\Gamma)$ on $\left(\Brig\right)^{\psi = 0} \otimes_{\Qp} \Dcris(V)$, we obtain
   \begin{align*}
    & L_{p, \alpha} \cdot (1 + \pi) v_{\alpha^{-1}} + L_{p, \beta} \cdot (1 + \pi) v_{\beta^{-1}} \\
     & =  L_{p, 1} \cdot \frac{-1}{a(0)\beta} (1 + \pi)  \vp \left(\tfrac{t}{\pi}\right)^{k-1} v_{\beta^{-1}} 
     + L_{p, 2} \cdot (1 + \pi)\left(\frac{1}{\alpha} v_{\alpha^{-1}} + \frac{\vp(a)}{a(0) \beta}v_{\beta^{-1}}\right).
   \end{align*}
   Since $(1 + \pi)v_{\alpha^{-1}}$ and $(1 + \pi)v_{\beta^{-1}}$ are clearly a basis for $(\Brig)^{\psi = 0} \otimes \Dcris(V)$ as a $\calH(\Gamma)$-module, we can project onto each of these to obtain the proposition.

   In the split case, one argues identically using the formulae $n_1 = \left(\tfrac{t}{\pi}\right)^{k-1} v_{\beta^{-1}}$ and $n_2 = v_{\alpha^{-1}}$.
  \end{proof}

  Note that $\vp\left(\tfrac{t}{\pi}\right)^{k-1}$ has a zero of order $k-1$ at $\zeta - 1$, for any root of unity $\zeta$ of order $p^n$, $n \ge 2$. It is straightforward to see (using Theorem 5.4 and Lemma 5.9 of \cite{leiloefflerzerbes10}) that this is equivalent to $\mathfrak{M}^{-1}((1 + \pi)\vp\left(\tfrac{t}{\pi}\right)^{k-1})$ vanishing at every character of $\Gamma$ of the form $\chi^j \omega$, where $\chi$ is the cyclotomic character, $0 \le j \le k-2$ and $\omega$ is a finite-order character not factoring through $\Delta$. Hence the factor multiplying $L_{p, 1}$ in Proposition \ref{prop:relations-mult} vanishes at all but finitely many of the points corresponding to critical values of the complex $L$-function.

  In fact $L_{p, 1}$ vanishes at most of the remaining points: 

  \begin{proposition} 
   If $V_f^*$ is locally split, then the distribution $L_{p, 1}$ vanishes at $z \mapsto z^i \eta(z)$, for any $0 \le i \le k-2$ and any character $\eta$ of $\Zp^\times$ factoring through $\Delta$. If $V_f^*$ is not locally split, this is true at all characters of this form with $\eta$ nontrivial.
  \end{proposition}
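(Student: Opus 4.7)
The plan is to apply Theorem \ref{prop:relations-mult} at the character $\chi^i\eta$, use the interpolation formulas for $L_{p,\alpha}$ and $L_{p,\beta}$, and reduce each case to a computation of a single Mellin-transform factor. Both cases rely on the fact---complementary to the one recorded just before the proposition---that $M_1 := \mathfrak{M}^{-1}\bigl((1+\pi)\vp\bigl((t/\pi)^{k-1}\bigr)\bigr)$ does \emph{not} vanish at $\chi^i\eta$ when $\eta$ factors through $\Delta$ and $0 \le i \le k-2$; this follows from the Mellin-transform dictionary of \cite[Theorem 5.4 and Lemma 5.9]{leiloefflerzerbes10} by direct inspection at the relevant points $\pi = \zeta - 1$, $\zeta \in \mu_p$.

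In the locally split case, Theorem \ref{prop:relations-mult}(a) reads $\beta L_{p,\beta} = L_{p,1}\cdot M_1$ in $E\otimes\calH(\Gamma)$. Since $L_{p,\beta}$ vanishes at every special character when $V_f^*$ is locally split, evaluating at $\chi^i\eta$ yields $L_{p,1}(\chi^i\eta)\,M_1(\chi^i\eta) = 0$, and the non-vanishing of $M_1$ then forces $L_{p,1}(\chi^i\eta) = 0$.

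For the non-split case with $\eta$ nontrivial on $\Delta$, eliminating $L_{p,2}$ via $L_{p,2} = \alpha L_{p,\alpha}$ rewrites Theorem \ref{prop:relations-mult}(b) as
\[ L_{p,1}\cdot M_1 \;=\; -a(0)\beta L_{p,\beta} + \alpha L_{p,\alpha}\cdot M_2, \qquad M_2 := \mathfrak{M}^{-1}\bigl((1+\pi)\vp(a)\bigr). \]
Since $\eta$ is tame and nontrivial (hence of conductor $p$), the $n = 1$ case of \eqref{eq:interpolating} gives
\[ \alpha L_{p,\alpha}(\chi^i\eta) \;=\; \beta L_{p,\beta}(\chi^i\eta) \;=\; p^{i+1}\tilde L(f,\eta^{-1},i+1)/G(\eta^{-1}), \]
so the identity specializes at $\chi^i\eta$ to $L_{p,1}(\chi^i\eta)\,M_1(\chi^i\eta) = \bigl(M_2(\chi^i\eta) - a(0)\bigr)\cdot \alpha L_{p,\alpha}(\chi^i\eta)$. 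By the non-vanishing of $M_1$, it suffices to show $M_2(\chi^i\eta) = a(0)$.

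For this last identity, the constant $a(0) \in \Lambda_E(\Gamma)$ has Mellin transform $a(0)(1+\pi)$, hence $M_2 - a(0) = \mathfrak{M}^{-1}(g)$ where $g := (1+\pi)\bigl(\vp(a) - a(0)\bigr)$. By Proposition \ref{prop:a-formula}, $a = a(0) + O(t^k)$ as a power series in $t$, so also $a(\pi) = a(0) + O(\pi^k)$ in $\Brig$; substituting $\pi \mapsto \vp(\pi)$ gives $\vp(a)(\pi) - a(0) = a(\vp(\pi)) - a(0) = O\bigl(\vp(\pi)^k\bigr)$. Since $\vp(\pi) = (1+\pi)^p - 1 = \prod_{\zeta^p = 1}(\pi - (\zeta-1))$ has a simple zero at each $\pi = \zeta - 1$, $\zeta \in \mu_p$, the element $g$ vanishes to order $\ge k$ at all these points, and the same dictionary of \cite{leiloefflerzerbes10} produces the vanishing of $\mathfrak{M}^{-1}(g)$ at $\chi^i\omega$ for every $\omega$ factoring through $\Delta$ and every $0 \le i \le k-2$. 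The hardest part of the argument is precisely this final bookkeeping---translating vanishing of $g$ at the $p$-th roots of unity into vanishing of $\mathfrak{M}^{-1}(g)$ at the tame characters---and the cited results of \cite{leiloefflerzerbes10} are what make it work.
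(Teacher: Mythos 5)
Your proof is correct and follows essentially the same route as the paper: the split case is identical, and in the non-split case the paper likewise uses $\alpha L_{p,\alpha}(\chi^i\eta)=\beta L_{p,\beta}(\chi^i\eta)$ for tame nontrivial $\eta$ to reduce everything to the vanishing of $\mathfrak{M}^{-1}\bigl((1+\pi)\vp(a-a(0))\bigr)$ at these characters, proved exactly as you do from the order-$\ge k$ vanishing of $(1+\pi)\vp(a-a(0))$ at the points $\zeta-1$, $\zeta\in\mu_p$. The only cosmetic difference is that the paper spells out the last step via the operators $\partial^j=(1+\pi)\frac{\mathrm{d}}{\mathrm{d}\pi}$ and the pairing with locally polynomial functions of degree $\le k-2$, where you defer to the dictionary of \cite{leiloefflerzerbes10}.
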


  \begin{proof}
   If the representation is split, $L_{\beta}$ is known to vanish at all special characters. Since we have $\beta L_{p, \beta} = L_{p, 1} \mathfrak{M}^{-1}((1 + \pi)\vp\left(\tfrac{t}{\pi}\right)^{k-1})$, and the second factor on the right-hand side does not vanish at the characters $z \mapsto z^i \eta(z)$, $L_{p, 1}$ must do so.

   In the non-split case, we suppose that $\eta$ is nontrivial. Then we must have
   \[ \beta L_{p, \beta}(\chi^i \eta) = \alpha L_{p, \alpha}(\chi^i \eta),\]
   since both sides are equal to $p^{j+1} \tilde L(f, \eta^{-1}, 1 + j) / G(\eta^{-1})$. Substituting the formulae of Proposition \ref{prop:relations-mult}, we obtain
   \begin{multline*}
    L_{p, 2}(\chi^j \eta) \mathfrak{M}^{-1}\Big( (1 + \pi)\vp\big(a - a(0)\big)\Big)(\chi^j \eta) \\
    = L_{p, 1}(\chi^j \eta) \mathfrak{M}^{-1}\left( (1 + \pi)\vp\left(\tfrac{t}{\pi}\right)^{k-1}\right).
   \end{multline*}
   By construction $b := a - a(0)$ vanishes to order $k$ at 0, so $(1 + \pi)\vp(b)$ vanishes to order $k$ at $\zeta_p - 1$ for any nontrivial $p$-th root of unity $\zeta$. Hence the distributions $\partial^j (1 + \pi) \vp(b)$ vanish at $\zeta_p - 1$, for $i = 0, \dots, k-2$, where $\partial = (1 + \pi) \tfrac{\mathrm{d}}{\mathrm{d}\pi}$; equivalently, $(1 + \pi) \vp(b)$ pairs to zero with any function on $\Zp$ whose restriction to each coset of $p\Zp$ is a polynomial of degree $\le k-2$. In particular, it pairs to zero with the characters $z\mapsto z^j \eta(z)$ (extended to functions on $\Zp$ zero on $p\Zp$). Since $\mathfrak{M}^{-1}\left( (1 + \pi)\vp\left(\tfrac{t}{\pi}\right)^{k-1}\right)$ does not vanish at these characters, $L_{p, 1}$ must vanish.
  \end{proof}

  This proposition, together with the preceding discussion, imply that the distribution $L_{p, 1} \mathfrak{M}^{-1}((1 + \pi)\vp\left(\tfrac{t}{\pi}\right)^{k-1})$ vanishes at every locally algebraic character of degree $\le k-2$ that is not algebraic, and at every locally algebraic character in the split case. In the split case, this distribution is simply $L_{p, \beta}$, and one deduces the well-known fact that $L_{p, \beta}$ ``pretends rather convincingly to be 0'' (see \cite[Remarque 4.12]{colmez04}).

  \begin{remark}
   From the formula $a = t^{k-1} \alpha x_k + p^{1-k} \mu \vp(a)$, we deduce that 
   \[ a(\zeta_{p^j} - 1) = p^{1-k} \mu a(\zeta_{p^{j}}^p - 1)\]
   for any $j \ge 1$, where $\zeta_{p^j}$ is a $p^j$-th root of unity. Since $p^{1-k} \mu = \alpha / \beta$, this gives
   \[ a(\zeta_{p^j} - 1) = \left(\frac{\alpha}{\beta}\right)^j a(0).\]
   Since $a - a(0)$ vanishes to order $\ge k-1$ at 0, $a - \left(\frac{\alpha}{\beta}\right)^j a(0)$ vanishes to order $k-1$ at $\zeta_{p^j} - 1$.
   This gives a purely analytic proof that for any $F \in \calH(\Gamma)$ satisfying the interpolation property of \eqref{eq:interpolating} for the critical-slope root $\beta$, the distribution
   \[ G = \alpha L_{p, \alpha} \mathfrak{M}^{-1}\left( (1 + \pi) \vp(a) \right) - a(0) \beta F\]
   vanishes at all special characters of $\Gamma$ of conductor $> 1$. Hence $G$ factorises as $\mathfrak{M}^{-1}\left( (1 + \pi)\vp\left(\frac{t}{\pi}\right)^{k-1}  \right) H$ for some distribution $H$, and if $F$ has order $k-1$, then $H$ must be in $\Lambda_E(\Gamma)$. 
   
   In particular, taking $F$ to be the analytic critical-slope $L$-function $L_{p, \beta}^{\PS}$, we obtain a decomposition of $L_{p, \beta}^{\PS}$ analogous to Proposition \ref{prop:relations-mult}. However, without the above interpretation of $H$ via Wach modules, it is not clear how one could determine whether or not $H$ was integral.
  \end{remark}

  In the non-split case, we also obtain a formula for $L_{p, 1}(\chi^i)$, for $i = 0, \dots, k-2$, which allows us to show that it is non-vanishing in some cases:

  \begin{proposition}
   If $V_f^*$ is non-split and at least one of the $L$-values $L(f, j)_{j = 1, \dots, k-1}$ is non-zero, then $L_{p, 1} \ne 0$.
  \end{proposition}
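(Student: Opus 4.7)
The plan is to evaluate the decomposition of Theorem~\ref{prop:relations-mult}(b) at the characters $\chi^i$ for $i = 0, \dots, k-2$ and read off a closed formula for $L_{p,1}(\chi^i)$ in terms of $\tilde L(f, 1, i+1)$.

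The key computation is the Mellin evaluation $\mathfrak{M}^{-1}\bigl((1+\pi)\vp(a)\bigr)(\chi^i)$. I will use the standard identity $\mathfrak{M}^{-1}(F)(\chi^i) = (d/dt)^i F|_{t=0}$, where $t = \log(1+\pi)$ (so that $\partial = d/dt$). Since $(1+\pi)\vp(a) = e^t a(pt)$ in $t$-coordinates, the Leibniz rule expresses this quantity as $\sum_{j=0}^i \binom{i}{j} p^{i-j} a^{(i-j)}(0)$. Proposition~\ref{prop:a-formula} shows that the coefficient of $t^m$ in $a$ vanishes for $1 \le m \le k-1$, so only the $j = i$ term contributes and the sum collapses to $a(0) = (k-2)!\alpha/(1 - p^{1-k}\mu) \neq 0$.

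Substituting this together with $\alpha L_{p,\alpha} = L_{p,2}$ into Theorem~\ref{prop:relations-mult}(b) yields
\[ L_{p,1}(\chi^i) \cdot \mathfrak{M}^{-1}\bigl((1+\pi)\vp((t/\pi)^{k-1})\bigr)(\chi^i) = a(0)\bigl(\alpha L_{p,\alpha}(\chi^i) - \beta L_{p,\beta}(\chi^i)\bigr). \]
In the non-split case both $L_{p,\alpha}$ and $L_{p,\beta}$ satisfy the interpolation formula~\eqref{eq:interpolating} at $\chi^i$ with the respective Euler factors $E_\alpha(i) = (1 - p^i\alpha^{-1})(1 - \varepsilon p^{k-2-i}\alpha^{-1})$ and $E_\beta(i)$ (with $\alpha$ replaced by $\beta$), so the right-hand side becomes $a(0)\tilde L(f, 1, i+1)\bigl(\alpha E_\alpha(i) - \beta E_\beta(i)\bigr)$. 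A short expansion using $\alpha\beta = \varepsilon p^{k-1}$ gives $\alpha E_\alpha(i) - \beta E_\beta(i) = (\alpha - \beta)(p-1)/p$, which is nonzero because $\ord_p(\alpha) = 0 \neq k-1 = \ord_p(\beta)$. Hence any $j \in \{1, \dots, k-1\}$ with $L(f, j) \neq 0$ forces $L_{p,1}(\chi^{j-1}) \neq 0$, and therefore $L_{p,1} \neq 0$.

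The only substantive step is the Mellin computation of $\mathfrak{M}^{-1}((1+\pi)\vp(a))(\chi^i)$; this is essentially the same mechanism used in the preceding proposition to prove the vanishing of $L_{p,1}$ at twisted characters $\chi^i\eta$, and beyond that the argument is formal algebra with the interpolation formulas together with $\alpha \beta = \varepsilon p^{k-1}$.
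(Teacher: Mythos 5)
Your proposal is correct and follows essentially the same route as the paper's proof: evaluate the decomposition of Theorem~\ref{prop:relations-mult}(b) at $\chi^i$, use the vanishing of $a - a(0)$ to order $\ge k-1$ at the origin to reduce $\mathfrak{M}^{-1}\bigl((1+\pi)\vp(a)\bigr)(\chi^i)$ to $a(0)$, and substitute the interpolation formulas. Your explicit computation $\alpha E_\alpha(i) - \beta E_\beta(i) = (\alpha-\beta)(p-1)/p$ just spells out the simplification the paper compresses into the phrase ``we obtain (eventually)'', and it reproduces the paper's final formula $-(k-2)!(p-1)p^{k-2}\varepsilon\,\tilde L(f,1,1+i)$.
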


  \begin{proof}
   For $0 \le j \le k-2$, we have
   \[ \mathfrak{M}^{-1}\left( (1 + \pi) \vp(a) \right)(\chi^j) = a(0),\]
   since $a - a(0)$ vanishes to degree $\ge k-1$ at the origin. Thus 
   \[ L_{p, 1}(\chi^j) \mathfrak{M}^{-1}\left( ( 1+ \pi)\vp\left(\tfrac{t}{\pi}\right)^{k-1}\right)(\chi^j) = a(0) \left(\alpha L_{p, \alpha} - \beta L_{p, \beta}\right).\]

   On the right-hand side, 
   \[ a(0) = \frac{(k-2)! \alpha}{1 - p^{1-k} \mu} = \frac{(k-2)!\alpha}{1 - \alpha/\beta} = \frac{(k-2)! \alpha \beta}{\beta - \alpha},\]
   and substituting the values of $L_{p,\alpha}$ and $L_{p, \beta}$ at $\chi^j$ from Equation \eqref{eq:interpolating} and simplifiying, we obtain (eventually)
   \begin{align*}
    L_{p, 1}&(\chi^j) \cdot \mathfrak{M}^{-1}\left( (1 + \pi)\vp\left(\tfrac{t}{\pi}\right)^{k-1} \right)(\chi^j) \\
    &= -(k-2)! (p-1) p^{k-2} \varepsilon \tilde L(f, 1, 1 + j).
   \end{align*}
  \end{proof}

  If $k \ge 3$, this is sufficient to show that $L_{p, 1} \ne 0$, since the complex $L$-function $L(f, j)$ does not vanish for $j > \frac{k}{2}$. If $k = 2$, the only character at which we can relate the value of $L_{p, 1}$ to the complex $L$-function is the trivial character, so when $L(f, 1) = 0$ (which can of course happen) we cannot show that $L_{p, 1} \ne 0$.

 \section{Newton polygons and Mellin transforms}\label{newton}

  In this section, we take $k = 2$, and present some explicit consequences of the above analysis for the algebraic critical-slope $L$-function $L_{p, \beta}$. For $s > 0$ let $C_s$ denote the closed affinoid disc $\{ X : |X| \le p^{-s}\}$. (For our purposes it will suffice to take $s$ rational; if $s$ is irrational, this space is not defined as an affinoid space, but it can be interpreted as a Berkovich space). For a rigid-analytic function $f$ on the open unit disc, we write $v_s(f) = \inf_{x \in C_s} \mathrm{ord}_p(f(x))$; note that $v_s(f)$ is clearly an increasing function of $s$.

  \begin{proposition}
   The function $s \mapsto v_s(f)$ is continuous, piecewise-linear, and concave. For any $s \ge 0$, the left-hand derivative of $v_s(f)$ at $s$ is the number of zeroes of $f$ on $C_s$ (counted with multiplicity), and the right-hand derivative is the number of zeros of $f$ on the open disc $\{ X : |X| < p^{-s}\}$.
  \end{proposition}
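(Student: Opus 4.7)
The plan is to combine the non-archimedean maximum modulus principle (to obtain a clean formula for $v_s(f)$) with Weierstrass preparation on each closed disc $C_{s_0}$ (to read off the derivatives at $s_0$ from the zeros of $f$).

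Writing $f(X) = \sum_{n \ge 0} a_n X^n$ with coefficients in the relevant valued field, the non-archimedean maximum modulus principle gives the formula $v_s(f) = \inf_{n \ge 0}\bigl(v(a_n) + ns\bigr)$. This realises $v_s(f)$ as the pointwise infimum of a family of affine functions of $s$, so it is automatically concave. Since $f$ converges on the open unit disc, for each fixed $s_0 > 0$ the quantity $v(a_n) + n s_0$ tends to $+\infty$ as $n \to \infty$, so on a small interval around $s_0$ only finitely many indices can ever achieve the infimum. Locally, $v_s(f)$ thus coincides with the minimum of finitely many affine functions, which forces continuity and piecewise linearity on $(0, \infty)$.

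For the derivative statement I would apply Weierstrass preparation on the Tate algebra of $C_{s_0}$ (after base change to a complete valued extension containing the zeros of $f$ in $C_{s_0}$, if needed): this yields a factorisation $f = u \cdot P$ on $C_{s_0}$, where $P(X) = c\prod_i(X - \alpha_i)$ is a polynomial whose roots are exactly the zeros of $f$ in $C_{s_0}$, and $u$ is a unit on $C_{s_0}$, hence on some strictly larger disc $C_{s_0 - \epsilon}$. A unit $u = \sum b_n X^n$ on $C_{s_0 - \epsilon}$ satisfies $v(b_0) < v(b_n) + nt$ for all $t \ge s_0 - \epsilon$, so $v_t(u) = v(u(0))$ is constant near $s_0$ and contributes nothing to the derivative. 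A direct calculation gives $v_t(X - \alpha) = \min\bigl(t, v(\alpha)\bigr)$, whence $v_t(f) = v(u(0)) + v(c) + \sum_i \min\bigl(t, v(\alpha_i)\bigr)$ for $t$ in a neighbourhood of $s_0$. The left derivative of $\min(t, v(\alpha_i))$ at $t = s_0$ is $1$ if and only if $v(\alpha_i) \ge s_0$, and the right derivative is $1$ if and only if $v(\alpha_i) > s_0$; summing yields exactly the counts of zeros of $f$ in $C_{s_0}$ and in the open disc $\{|X| < p^{-s_0}\}$ respectively.

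The only mild subtlety is ensuring Weierstrass preparation is available on discs of irrational radius. One option is to invoke the Berkovich formulation flagged in the statement; alternatively, one observes that the zeros of $f$ lie in a discretely valued algebraic extension of $\Qp$, so no zero has valuation exactly equal to an irrational $s_0$, in which case the left and right derivative counts coincide at $s_0$, and one concludes by a limit argument from rational radii using the local finiteness of the minimising indices established in the first step. All other ingredients are routine non-archimedean analysis.
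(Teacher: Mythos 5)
Your proposal is correct and is, in effect, a complete write-up of exactly what the paper invokes: the paper's entire proof is the one-line remark that the proposition ``is simply a restatement of the standard theory of the Newton polygon,'' and your argument (the formula $v_s(f)=\inf_n(v(a_n)+ns)$ from the Gauss/sup norm, giving concavity and local piecewise linearity, followed by Weierstrass preparation to identify the one-sided slopes with zero counts) is precisely that standard theory, including the correct handling of irrational $s$ via the rationality of the valuations of the zeros. No gaps; the only nitpick is that the claim that $u$ remains a unit on a slightly larger disc deserves the one-line justification that $f$ has only finitely many zeros in any $C_{s'}$ with $s'>0$, so their valuations cannot accumulate at $s_0$ from below.
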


  \begin{proof} 
   This is simply a restatement of the standard theory of the Newton polygon.
  \end{proof}

  Let $\gamma_1$ be a generator of $\Gamma_1$, and let $x = \gamma_1 - 1$, so $\calH(\Gamma_1)$ is the ring of power series in $x$ converging on the unit disc.

  \begin{proposition}
   Let $f \in \Brig$ and let $g = \mathfrak{M}^{-1}\left((1 + \pi) \vp(f)\right) \in \calH(\Gamma_1)$. Then for any $s$ with $0 < s < 1$, we have $v_s(f) = v_s(g)$.
  \end{proposition}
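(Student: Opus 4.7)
My plan is to prove the equality by (i) writing down an explicit formula for $f$ in terms of the coefficients of $g$ using the Mellin transform, (ii) performing an isometric analytic change of variables between the $\pi$-disc and the $x$-disc, and (iii) showing the resulting power series agrees with $g$ up to an error of strictly larger $v_s$-norm.

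For step (i): writing $g = \sum_n a_n x^n$ with $x = \gamma_1 - 1$, the fact that $\gamma_1$ acts on $1+\pi$ by $(1+\pi)^u$ (where $u = \chi(\gamma_1) \in 1 + p\Zp^\times$) gives
\[
 \mathfrak{M}(g) = \sum_n a_n \sum_{k=0}^{n}(-1)^{n-k}\binom{n}{k}(1+\pi)^{u^k}.
\]
Rewriting each $(1+\pi)^{u^k} = (1+\pi)\vp\bigl((1+\pi)^{(u^k-1)/p}\bigr)$ and matching against $(1+\pi)\vp(f)$ yields
\[
 f(\pi) = \sum_n a_n \sum_{k=0}^{n}(-1)^{n-k}\binom{n}{k}(1+\pi)^{(u^k-1)/p}.
\]

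For step (ii): set $\phi(x) := (1+x)^{p/(u-1)} - 1$. Since $\gamma_1$ is a topological generator of $\Gamma_1$, $\ord_p(u-1) = 1$, so $p/(u-1) \in \Zp^\times$; hence $\phi$ is an analytic automorphism of the open unit disc with $\phi'(0) \in \Zp^\times$, and therefore induces an isometric bijection of every concentric closed subdisc. In particular $v_s(f) = v_s(f \circ \phi)$ for every $s > 0$. Under $\phi$, $(1+\pi)^{(u^k-1)/p}$ becomes $(1+x)^{c_k}$ with $c_k := (u^k-1)/(u-1) = 1 + u + \cdots + u^{k-1}$, giving
\[
 (f \circ \phi)(x) = \sum_n a_n \sum_k (-1)^{n-k}\binom{n}{k}(1+x)^{c_k}.
\]

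For step (iii): since $c_k \equiv k \pmod{u-1}$ and $u - 1 \in p\Zp$, one has $(1+x)^{c_k} = (1+x)^k \cdot (1+x)^{c_k - k}$ with $(1+x)^{c_k-k} - 1$ having coefficients of positive $p$-adic valuation. Because $\sum_k (-1)^{n-k}\binom{n}{k}(1+x)^k = x^n$, this shows $(f \circ \phi)(x) = g(x) + E(x)$ where the error $E$ has a $v_s$-norm governed by the factor $(u-1)$ present in every term. Combined with the ultrametric inequality and the bound $v_s(E) > v_s(g)$ for $0 < s < 1$, this yields $v_s(f) = v_s(f \circ \phi) = v_s(g)$.

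The main obstacle is precisely the estimate $v_s(E) > v_s(g)$, which I expect to be delicate. A naïve term-by-term estimate gives $v_s(\text{error from the } x^n \text{ summand}) \ge 1 + s$; this strictly exceeds $v_s(x^n) = ns$ only when $s < 1/(n-1)$. To cover the full range $0 < s < 1$, one must exploit that $g \in \calH(\Gamma_1)$ forces $\ord_p(a_n) + ns \to \infty$ (so only finitely many $n$ contribute to $v_s(g)$), together with a sharper estimate on the higher-degree coefficients of $(1+x)^{c_k - k} - 1$ using the extra $p$-divisibility of $c_k - k$ when $p \mid k$. The restriction $0 < s < 1$ enters precisely because beyond $s = 1$ the $p$-divisibility gain in the error can no longer compete with the polynomial contribution from $g$.
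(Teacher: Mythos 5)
Your steps (i) and (ii) are sound: the formula for $f$ in terms of the coefficients of $g$ follows from $(\gamma_1-1)^n(1+\pi)=\sum_k(-1)^{n-k}\binom{n}{k}(1+\pi)^{u^k}$ together with injectivity of $\vp$, and $\phi(x)=(1+x)^{p/(u-1)}-1$ is indeed an isometric automorphism of each disc $C_s$, so $v_s(f)=v_s(f\circ\phi)$. But the proof is not complete, and the missing piece is where all the content of the proposition lives. You need a bound of the shape $v_s(E_n)\ge ns+\delta(s)$ with $\delta(s)>0$ uniform in $n$ (so that it survives the infimum over $n$ defining $v_s(g)$), where $E_n=\sum_k(-1)^{n-k}\binom{n}{k}(1+x)^k\bigl((1+x)^{c_k-k}-1\bigr)$. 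Estimating this sum term by term in $k$ produces \emph{no dependence on $n$ whatsoever}: each summand is only controlled by $v_s\bigl((1+x)^{c_k-k}-1\bigr)\ge\inf_{m\ge1}\bigl(1-\ord_p(m)+ms\bigr)$, a quantity independent of $n$ which moreover tends to $-\infty$ as $s\to0$. So the linear-in-$n$ growth you need can only come from cancellation in the alternating sum over $k$, and nothing in your proposal addresses that cancellation; the convergence condition on $(a_n)$ and the extra divisibility of $c_k-k$ for $p\mid k$ that you invoke cannot supply it. As written, the argument establishes the claim only for $s$ small relative to the degrees that realise $v_s(g)$, not for all $0<s<1$.

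For comparison, the paper runs the transform in the opposite direction: writing $f=\sum_n a_n\pi^n$, it has $g=\sum_n a_n\phi_n(x)$ with $\phi_n=\mathfrak{M}^{-1}\bigl((1+\pi)\vp(\pi)^n\bigr)$, and it invokes the structural fact that $\phi_n(x)=\tau_n(x)\,x(x-\lambda_1)\cdots(x-\lambda_{n-1})$ with $\tau_n\in\Lambda_{\Zp}(\Gamma_1)^\times$ and $\lambda_i=\chi(\gamma_1)^i-1\in p\Zp$. That factorization packages exactly the cancellation you are missing: the polynomial $x(x-\lambda_1)\cdots(x-\lambda_{n-1})-x^n$ gains a factor of $p$ for each drop in degree, whence $v_s\bigl(\phi_n-\tau_n(0)x^n\bigr)\ge\min\bigl((n+1)s,(n-1)s+1\bigr)>ns$ for $0<s<1$, and summing over $n$ finishes the proof. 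To complete your version you would have to prove an analogous closed-form or factorization statement for $\sum_k(-1)^{n-k}\binom{n}{k}(1+x)^{c_k}$ exhibiting the cancellation across $k$ explicitly --- at which point you would essentially have reconstructed the paper's key input. I would therefore classify the proposal as having a genuine gap rather than as an alternative proof.
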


  \begin{proof}
   Let us suppose $f = \sum a_n \pi^n$. Then $g = \sum_{n \ge 0} a_n \phi_n(x)$, where $\phi_n(x) = \mathfrak{M}^{-1}\left( (1 + \pi) \vp(\pi)^n\right)$.

   We know that $\phi_n(x) = \tau_n(x) x (x - \lambda_1) \dots (x - \lambda_{n-1})$, where $\lambda_i = \chi(\gamma_1)^i - 1 \in p\Zp$ and $\tau_n \in \Lambda_{\Qp}(\Gamma_1)^\times$. Since $(1 + \pi)\vp(\pi)^n \in \mathbb{A}^+_{\Qp} \setminus p \mathbb{A}^+_{\Qp}$, we must have $\tau_n \in \Lambda_{\Zp}(\Gamma_1)^\times$.

   Let us write $d_n = \tau_n(0)$. Then we have
   \begin{align*}
    v_s \left(\phi_n(x) - d_n x^n\right) = & v_s \Big( x^n \left(\tau_n(x) - d_n\right)\\
      & + \tau_n(x) \left( x(x-\lambda_1)\dots(x-\lambda_{n-1}) - x^n\right)\Big).
   \end{align*}
   We evidently have $x^n \left(\tau_n(x) - d_n\right) \in x^{n+1} \Zp[[x]]$, so 
   \[ v_s\left(x^n \left(\tau_n(x) - d_n\right)\right) \ge (n+1)s.\]
   For the second group of terms, the coefficient of $x^{n-j}$ in the product $x(x-\lambda_1)\dots(x-\lambda_{n-1}) - x^n$ is clearly divisible by $p^j$; since $\tau_n(x) \in \Zp^\times$, we have
   \begin{align*} 
    v_s\Big( \tau_n(x) \left( x(x-\lambda_1)\dots(x-\lambda_{n-1}) - x^n\right)\Big) & \ge \inf_{1 \le j \le n} (n-j)s + j \\
    & = (n-1)s + 1.
   \end{align*}
   Since $0 < s < 1$, both $(n + 1)s$ and $(n - 1)s + 1$ are strictly bigger than $v_s( d_n x^n) = ns$. Thus, in particular, $v_s( \phi_n(x)) = ns$.

   We now write
   \[ g = \sum_{n \ge 0} a_n \phi_n(x) = \left(\sum_{n \ge 0} a_n d_n x^n\right) + \left(\sum_{n \ge 0} a_n \left(\phi_n(x) - d_n x^n\right)\right).\]
   Clearly we have
   \[ v_s\left(\sum_{n \ge 0} a_n d_n x^n\right) = \inf_{n \ge 0} (ns + \mathrm{ord}_p a_n) = v_s(f).\]
   On the other hand, 
   \begin{align*}
     v_s\left(\sum_{n \ge 0} a_n\left(\phi_n(x) - d_n x^n\right)\right) & \ge \inf_{n \ge 0} \Big( \inf\big((n-1)s + 1, (n + 1)s\big) + \mathrm{ord}_p a_n\Big) \\
     & = v_s(f) + \inf(s, 1 - s).
   \end{align*}
   Hence we must have $v_s(g) = v_s(f)$.
  \end{proof}

  Combining the two preceding propositions, we see that the zeros of the power series $f$ and $g$ lying ``near the boundary'' must have the same valuations; the zeros inside the closed disc $|X| \le p^{-1}$ are equal in number, but can be in very different places within this disc, as the examples $f = \pi^n$ show.

  \begin{corollary}\label{corr:bounds-a}
   Let $\mu \in \calO_E^\times$, $\mu \ne 1$, and let $f$ be the unique element of $E \otimes \Brig$ such that
   \[ \left(1 - \frac{\mu}{p} \vp\right)(f) = \frac{t}{\pi} + \frac{t}{2}.\]
   Then $f$ has $(p-1)$ zeros of valuation $\ge \frac{1}{p-1}$, $p^i(p-1)^2$ zeros of valuation $\tfrac{1}{p^i(p-1)^2}$ for each integer $i \ge 0$, and no other zeros. Moreover, $v_s(f) < v_s\left(\tfrac t \pi\right)$ for $s < \frac{1}{(p-1)^2}$; and we have
   \begin{align*}
    \liminf_{s \to 0}\Big( v_s\left(\tfrac t \pi\right) - v_s\left(f\right)\Big) &= \frac{1}{(p-1)^2}\\
    \limsup_{s \to 0}\Big( v_s\left(\tfrac t \pi\right) - v_s\left(f\right)\Big) &= \frac{1}{(p-1)}.
   \end{align*}
  \end{corollary}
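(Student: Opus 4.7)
The strategy is to determine the Newton polygon of $f$ in the variable $\pi$ explicitly, from which both the zero counts and the $v_s$-comparisons will follow via Proposition~6.1. I would start from Proposition~\ref{prop:a-formula} specialized to $k=2$, giving the explicit series
$f = \frac{1}{1-\mu/p} + \sum_{n \ge 2,\ n\ \text{even}} \frac{B_n t^n}{n!(1-\mu p^{n-1})}$,
and compute the Newton polygon of $f$ viewed as a power series in $t$ using von~Staudt--Clausen (for $v_p(B_n)$) and Legendre's formula (for $v_p(n!)$). The extremal points $(n, -n/(p-1))$ occur exactly at $n$ with $(p-1)\mid n$ and $s_p(n)=p-1$, so the polygon starts at $(0,1)$, has a single vertex at $(p-1,-1)$, and continues along the line $y = -x/(p-1)$. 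This gives, for $s \ge 1/(p-1)$, $v_s(f) = \max(1,\ (p-1)s-1)$.

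For $s < 1/(p-1)$ the $t$-series no longer controls $v_s(f)$ directly, so I would iterate the functional equation $f = (t/\pi + t/2) + (\mu/p)\vp(f)$. Since $v_s(\vp(\pi)) = ps$ in this range, we have $v_s(\vp(f)) = v_{ps}(f)$, yielding (up to cancellation) the recursion
$v_s(f) = \min\bigl(v_s(t/\pi + t/2),\ -1 + v_{ps}(f)\bigr)$.
Unwinding this $K$ times, with $K$ chosen so that $p^K s$ just enters the central range $\ge 1/(p-1)$, expresses $v_s(f)$ as an explicit minimum over the contributions $v_s\!\bigl((\mu/p)^k \vp^k(t/\pi+t/2)\bigr)$ for $0 \le k < K$ together with the ``central'' tail $v_s\!\bigl((\mu/p)^K \vp^K(f)\bigr)$. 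Each of these is computable using the known Newton polygon of $t/\pi$ (whose zeros are the primitive $p^n$-th roots of unity minus~$1$) and the central formula above.

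Performing this minimization yields a piecewise-linear concave function $v_s(f)$ of $s$ with breakpoints precisely at $s = 1/(p^i(p-1)^2)$ for $i \ge 0$, with horizontal slopes $(p-1), p(p-1), p^2(p-1), \ldots$ between them. By Proposition~6.1 the slope jumps then give the zero counts: $p - 1$ zeros of valuation $\ge 1/(p-1)$ from the initial segment, and $p^i(p-1)^2$ zeros of valuation $1/(p^i(p-1)^2)$ for each $i \ge 0$, with no others. Subtracting the piecewise-linear formula for $v_s(t/\pi) = (p^k-1)s-k$ on $[1/(p^{k+1}(p-1)), 1/(p^k(p-1))]$ from that for $v_s(f)$ gives the strict inequality $v_s(f)<v_s(t/\pi)$ for $s < 1/(p-1)^2$, while the difference $v_s(t/\pi)-v_s(f)$ oscillates in $[1/(p-1)^2,\ 1/(p-1)]$; its values at $s = 1/(p^i(p-1)^2)$ converge to $1/(p-1)^2$ and at $s = 1/(p^i(p-1))$ to $1/(p-1)$ as $i \to \infty$, supplying the $\liminf$ and $\limsup$.

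The main obstacle is controlling cancellations so that the formal ``min'' in the recursion is actually achieved. This amounts to checking, at each candidate breakpoint, that the two competing terms have distinct valuations; this in turn relies on the hypothesis $\mu \ne 1$, which keeps the geometric sums $\sum_{k=0}^{K-1}\mu^k$ of the expected $p$-adic size, and on the explicit structure of the Newton polygon of $t/\pi$ at $p^n$-th roots of unity.
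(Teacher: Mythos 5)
Your strategy is the same as the paper's: compute $v_s(f)$ on the ``central'' range $s\ge\frac{1}{p-1}$ from the explicit Bernoulli series of Proposition \ref{prop:a-formula}, propagate to $s\to 0$ by iterating $f=\tfrac{t}{\pi}+\tfrac t2+\tfrac{\mu}{p}\vp(f)$, and read off zero counts and the comparison with $v_s(t/\pi)$ from the resulting piecewise-linear picture. The recursion, the breakpoints at $\tfrac{1}{p^i(p-1)^2}$ with slopes $p^i(p-1)$, and the $\liminf$/$\limsup$ endgame all match the paper. Two slips in your central-range computation need fixing, though. First, $v_s$ is an \emph{infimum} of $\ord_p(a_n)+ns$, so your formula must read $\inf\bigl(1,(p-1)s-1,\dots\bigr)$, not $\max$; as written it gives $v_{1/(p-1)}(f)=1$ instead of the correct value $0$, and since the recursion is seeded by $v_s(f)=(p-1)s-1$ on $\bigl[\tfrac{1}{p-1},\tfrac{p}{(p-1)^2}\bigr]$, the $\max$ would derail everything downstream. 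Second, the Newton polygon does not have a ``single vertex at $(p-1,-1)$'': the coefficient of $t^2$ is $B_2/2!=1/12$, of valuation $0$ for $p\ge 5$, so for $p\ge 7$ there is an extra vertex at $(2,0)$ and the correct formula is $v_s(f)=\inf\bigl(1,2s,(p-1)s-1\bigr)$ (the paper finds, e.g., two zeros of valuation $\tfrac12$ and $p-3$ of valuation $\tfrac{1}{p-3}$ for $p\ge 7$). Fortunately this extra vertex changes neither the total count of $p-1$ zeros of valuation $\ge\tfrac{1}{p-1}$ nor the branch $(p-1)s-1$ that feeds the recursion, so once $\max$ is corrected to $\inf$ your argument goes through; note also a harmless off-by-one in your interval for $v_s(t/\pi)$, which equals $(p^k-1)s-k$ on $\bigl[\tfrac{1}{p^{k}(p-1)},\tfrac{1}{p^{k-1}(p-1)}\bigr]$.
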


  (We have $f = a/\alpha$ in the notation of the previous sections, in the case $k = 2$.)

  \begin{proof}
   Let us calculate $v_s(f)$. We suppose first that $s > \frac{1}{p-1}$. Then the formal series expansion
   \[ f = \sum_{\substack{n \ge 0 \\ n \ne 1}} \frac{B_n t^n}{n!(1 - p^{n-1} \mu)}\]
   of proposition \ref{prop:a-formula} is convergent, and the disc $|\pi| \le p^{-s}$ corresponds to $|t| \le p^{-s}$; hence
   \[ v_f(s) = \inf_{\substack{n \ge 0 \\ n \ne 1}} \ord_p \left( \frac{B_n}{n! (1 - p^{n-1} \mu)}\right) + ns.\]

   For $n \ge 2$, $(1 - p^{n-1} \mu) \in \calO_E^\times$, and hence $\ord_p\left( \frac{B_n}{n! (1 - p^{n-1} \mu)}\right) = \ord_p \left( \frac{B_n}{n!}\right)$. We have
   \begin{align*}
    \inf_{n \ge 2} \ord_p \left( \frac{B_n}{n!}\right) + ns &= \inf_{|t| \le p^{-s}} \ord_p\left( \frac{t}{e^t - 1} - 1 + t/2\right) \\
    &= \inf_{|\pi| \le p^{-s}} \ord_p \left( \frac{t}{\pi} - 1 + t/2\right)\\
    &= \inf_{n \ge 2} \ord_p \left(\frac{1}{n + 1} - \frac{1}{2n}\right) + ns.
   \end{align*}

   Clearly, if $n \ge p-1$ and $n'$ is the largest integer $\le n$ of the form $p^j - 1$, or $n' = 2$ if $n < p-1$, then we have
   \[ \ord_p \left(\frac{1}{n' + 1} - \frac{1}{2n'}\right) + n's < \ord_p \left(\frac{1}{n + 1} - \frac{1}{2n}\right) + ns.\]
   Thus the infimum is attained either at $n = 2$ or at $n = p^j - 1$ for some $j \ge 1$. We calculate that the term for $n = p^j - 1$ is $(p^j - 1)s - j$, which is a strictly increasing function of $j$ for any $s > \frac{1}{p-1}$. Hence the infimum is 
   \[ \inf \left( (p - 1)s - 1, 2s \right).\]
   (If $p = 3$, then the $2s$ term does not appear, but the other term is $2s-1$ which is smaller anyway, so the formula is true as stated.)

   If we include also the term in the original sum for $n = 0$,we deduce that for any $s$ in this range
   \[ v_s(f) = \inf\left( 1, 2s, (p - 1)s - 1\right).\]
   One checks that if $p \ge 5$, this gives
   \[ v_s(f) =
    \begin{cases}
     (p-1)s - 1 &\text{if $\tfrac{1}{p-1} \le s \le \tfrac{1}{p-3}$}\\
     2s &\text{if $\tfrac{1}{p-3} \le s \le \tfrac{1}{2}$}\\
     1 &\text{if $s \ge \tfrac{1}{2}$.}
    \end{cases}
   \]
   whereas if $p = 3$ we obtain
   \[ v_s(f) =
    \begin{cases} 2s - 1 &\text{if $\frac{1}{2} \le s \le 1$}\\
     1 &\text{if $s \ge 1$.}
    \end{cases}
   \]
   Hence the zeros of $f$ with valuation $> \frac{1}{p-1}$ are: two zeros of valuation $1$ if $p = 3$; four zeros of valuation $\frac{1}{2}$ if $p = 5$; and two of valuation $\frac{1}{2}$ and $p-3$ of valuation $\frac{1}{p-3}$ if $p \ge 7$. In each case, the total number of zeros with valuations in this range is $p-1$.

   We now use this intensive study of $v_f(s)$ for relatively large $s$ to describe $v_f(s)$ for all smaller $s$. From the equation $f = t/\pi + t/2 + p^{-1} \mu \vp(f)$, we deduce that for $s$ in the interval $(\frac{1}{p-1}, \frac{p}{p-1})$ we have
   \begin{align*}
    v_{s/p}(f) &\ge \inf(v_{s/p}(t/\pi), -1 + v_s(f)) \\
    & = \inf(v_{s/p}(t) - s/p, -1 + v_s(f))\\
    &= -1 + \inf\left(\left(1 - \tfrac{1}{p}\right)s, v_s(f)\right).
   \end{align*}
   We find that if $\tfrac{1}{p-1} < s < \tfrac{p}{(p-1)^2}$, the term $-1 + v_s(f)$ is strictly smaller, whereas for larger $s$, the term $-1 + \left(1 - \tfrac{1}{p}\right)s$ is strictly smaller; hence this is the exact value of the left-hand side (by the ultrametric property), and by continuity this is the case at the crossover point $s = \tfrac{p}{(p-1)^2}$. This determines $v_s(f)$ in the in the interval $(\frac{1}{p(p-1)}, \frac{1}{p-1})$: we have $v_{\frac{1}{p(p-1)}}(f) = -1, v_\frac{1}{(p-1)^2}\left(f\right) = -1 + \frac{1}{p-1}$, and $v_\frac{1}{p-1}(f) = 0$, and $v_s(f)$ is linear between these points.

   We now consider the interval from $\frac{1}{p^{n+1}(p-1)}$ to $\frac{1}{p^{n}(p-1)}$, for $n \ge 1$. By iterating the functional equation, we find that
   \[ v_{s/p^i}(f) \ge -i + \inf\left(v_f(s), \left(1 - \frac{1}{p}\right)s, \left(1 - \frac{1}{p^2}\right)s, \dots, \left(1 - \frac{1}{p^i}\right)s\right).\]
   The terms $\left(1 - \frac{1}{p^2}\right)s, \dots, \left(1 - \frac{1}{p^i}\right)s$ are clearly strictly larger than $\left(1 - \frac{1}{p}\right)s$, so we deduce that for $s \ge \frac{1}{p-1}$, we have $v_f(s/p^i) = -i + \inf\left(v_f(s), \left(1 - \frac{1}{p}\right)s\right) = 1 - i + v_f(s/p)$. This gives the locations of the zeros in the statement of the proposition, and shows that $v_s(f) < v_s(\tfrac{t}{\pi})$ for all $s < \frac{1}{(p-1)^2}$.

   Finally, we establish the formulae for the limits inferior and superior. For $\frac{1}{p-1} \le s \le \frac{p}{p-1}$ and $i \ge 1$ we have 
   \begin{align*}
    v_{s/p^i}(t/\pi) - v_{s/p^i}(f) &= (v_{s/p^i}(t) - s/p^i) - (-i + \inf\left(\left(1 - \tfrac{1}{p}\right)s, v_s(f)\right) \\
    &= \left(1 - \frac{1}{p^i}\right)s - \inf\left(\left(1 - \tfrac{1}{p}\right)s, (p-1)s - 1\right).
   \end{align*}
   One checks that the minimum value of this expression is attained at $s = \frac{p}{(p-1)^2}$, where it is equal to $\frac{p^{i-1} - 1}{p^{i-1}(p-1)^2}$. Hence for any $s \le \frac{1}{p^i(p-1)}$, we have $v_s(t/\pi) - v_s(f) \ge \frac{p^{i-1} - 1}{p^{i-1}(p-1)^2}$ and equality occurs for $s = \frac{1}{p^i(p-1)^2}$; so the limit inferior as $s \to 0$ is $\frac{1}{(p-1)^2}$, as claimed. On the other hand, the maximum value is $\frac{p^{i-1} - 1}{p^{i-1}(p-1)}$, attained at both of the endpoints, so the limit superior is $\frac{1}{p-1}$.
  \end{proof}

  We now use this to describe the $L$-functions. We note that $\calH(\Gamma) = \bigoplus_{\eta} e_\eta \calH(\Gamma)$, where the sum is over the characters of $\Delta$ and $e_\eta$ is the corresponding idempotent. For $g \in \calH(\Gamma)$, we let $g^\eta$ be the unique element of $\calH(\Gamma_1)$ such that $e_\eta g^\eta = e_\eta g$. 

  For a character $\eta$ of $\Delta$, let $\lambda_1^\eta, \mu_1^\eta, \lambda_2^\eta, \mu_2^\eta$ be the Iwasawa $\lambda$- and $\mu$-invariants of $L_{p, 1}^\eta$ and $L_{p, 2}^\eta$. Note that $\lambda_2^\eta, \mu_2^\eta$ are equal to the corresponding invariants of the unit root $p$-adic $L$-function $L_{p, \alpha}$, which can be calculated in many cases; we know of no easy way to evaluate $\lambda_1^\eta, \mu_1^\eta$, but Proposition \ref{prop:integralLfunctions} gives conditions under which $\mu_2^\eta$ is forced to be non-negative.

  \begin{theorem}\label{maintheorem} 
   Let $\eta$ be a character of $\Delta$ and let $\lambda_1^\eta, \mu_1^\eta, \lambda_2^\eta, \mu_2^\eta$ be as above. Suppose that $V_f^*$ is non-split at $p$.
   \begin{enumerate}
    \item[(a)] If $\mu_2^\eta < \frac{1}{(p-1)^2} + \mu_1^\eta$, then for all sufficiently small $s$ we have
   \[ v_s(L_{p, \beta}^\eta) = \lambda_2^\eta s + \mu_2 + v_s(f),\]
   where $f$ is as above. In particular, for $n \gg 0$, $L_{p, \beta}^\eta$ has $p^n(p-1)^2$ zeros of valuation $\tfrac{1}{p^n(p-1)^2}$, and the total number of zeros of valuation $> r_n$ is $p^n(p-1) + \lambda_2^\eta$.
    \item[(b)] If $\mu_2^\eta > \frac{1}{p-1} + \mu_1^\eta$, then for sufficiently small $s$ the formula becomes
   \[ v_s(L_{p, \beta}^\eta) = \lambda_1^\eta s + \mu_1 + v_s(\tfrac{t}{\pi}),\]
   so for $n \gg 0$ there are $p^n(p-1)$ zeros of valuation $\tfrac{1}{p^n(p-1)^2}$ and the number of zeros of valuation $> r_n$ is $p^n - 1 + \lambda_1^\eta$.
   \end{enumerate}
  \end{theorem}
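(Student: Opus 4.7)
The plan is to start from Theorem~\ref{prop:relations-mult}(b) with $k=2$, project onto the $\eta$-isotypical component of $\calH(\Gamma_1)$, and use the fact that $-a(0)\beta$ is a nonzero scalar (contributing only a constant shift to $v_s$, invisible to the derivatives that count zeros). This gives
\[
-a(0)\beta\, L_{p,\beta}^\eta \;=\; L_{p,1}^\eta\, g_1 \;-\; L_{p,2}^\eta\, g_2,
\]
where $g_1 = \mathfrak{M}^{-1}\bigl((1+\pi)\vp(t/\pi)\bigr)$ and $g_2 = \mathfrak{M}^{-1}\bigl((1+\pi)\vp(a)\bigr)$. By the Mellin-transform proposition preceding Corollary~\ref{corr:bounds-a}, for $0<s<1$ we have $v_s(g_1)=v_s(t/\pi)$ and $v_s(g_2)=v_s(a)=v_s(f)$, using $a=\alpha f$ with $\alpha\in\calO_E^\times$.

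Since $L_{p,i}^\eta\in\Lambda_E(\Gamma_1)$, Weierstrass preparation gives $v_s(L_{p,i}^\eta) = \mu_i^\eta + \lambda_i^\eta s$ for all sufficiently small $s>0$, so that
\[
v_s(L_{p,1}^\eta g_1) = \mu_1^\eta + \lambda_1^\eta s + v_s(t/\pi), \qquad
v_s(L_{p,2}^\eta g_2) = \mu_2^\eta + \lambda_2^\eta s + v_s(f).
\]
The key step is to show that in each case the difference of these two valuations has a definite sign for \emph{all} sufficiently small $s$, so that the ultrametric identity $v_s(A-B)=\min(v_s(A),v_s(B))$ pins down $v_s(L_{p,\beta}^\eta)$ exactly. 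In case~(a), the bound $\liminf_{s\to 0}(v_s(t/\pi)-v_s(f)) = \tfrac{1}{(p-1)^2}$ from Corollary~\ref{corr:bounds-a} yields, for small enough $\epsilon>0$ and all small enough $s$,
\[
v_s(L_{p,1}^\eta g_1) - v_s(L_{p,2}^\eta g_2) \;\ge\; (\mu_1^\eta-\mu_2^\eta) + \tfrac{1}{(p-1)^2} - \epsilon + (\lambda_1^\eta-\lambda_2^\eta)s > 0,
\]
by the hypothesis $\mu_2^\eta<\tfrac{1}{(p-1)^2}+\mu_1^\eta$; hence $v_s(L_{p,\beta}^\eta) = \mu_2^\eta + \lambda_2^\eta s + v_s(f) + \mathrm{const}$ for small $s$. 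Case~(b) is entirely symmetric, using $\limsup_{s\to 0}(v_s(t/\pi)-v_s(f)) = \tfrac{1}{p-1}$ together with the reversed hypothesis to obtain $v_s(L_{p,\beta}^\eta) = \mu_1^\eta + \lambda_1^\eta s + v_s(t/\pi) + \mathrm{const}$.

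With the valuation formulae established, the zero counts follow from the proposition at the start of this section, which translates left- and right-hand derivatives of $v_s$ into multiplicities on closed and open discs. In case~(a), the description of the zeros of $f$ in Corollary~\ref{corr:bounds-a} directly gives $p^n(p-1)$ zeros of valuation $>r_n$ and $p^n(p-1)^2$ of valuation exactly $\tfrac{1}{p^n(p-1)^2}$; for $n\gg 0$ the $\lambda_2^\eta$ Weierstrass zeros of $L_{p,2}^\eta$ all lie in the open disc of radius $p^{-r_n}$ and avoid the valuation $\tfrac{1}{p^n(p-1)^2}$, producing the stated totals. In case~(b), the zeros of $t/\pi$ at $\zeta-1$ for nontrivial $p$-power roots of unity contribute $p^n-1$ zeros of valuation $>r_n$ and $p^n(p-1)$ of valuation exactly $r_n$, to which one adjoins the $\lambda_1^\eta$ Weierstrass zeros of $L_{p,1}^\eta$. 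The main technical obstacle is promoting the subsequential $\liminf$/$\limsup$ estimates of Corollary~\ref{corr:bounds-a} to pointwise bounds valid for every sufficiently small $s$, which is exactly what those statements, once unpacked, assert.
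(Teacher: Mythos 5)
Your argument is correct and is essentially the paper's own proof with the details spelled out: the paper likewise starts from the decomposition of Theorem~\ref{prop:relations-mult}(b) with $k=2$, uses Weierstrass preparation to write $v_s(L_{p,i}^\eta)=\mu_i^\eta+\lambda_i^\eta s$ for small $s$, and invokes the $\liminf$/$\limsup$ bounds of Corollary~\ref{corr:bounds-a} (which, as you note, are by definition pointwise one-sided bounds for all sufficiently small $s$) to decide which term dominates, then reads off the zero counts from the Newton polygon. No further ideas are needed.
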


  \begin{proof}
   We have a decomposition
   \[ -a(0) \beta L_{p, \beta} = L_{p, 1} \mathfrak{M}^{-1}\left( (1 + \pi) \vp\left(\tfrac{t}{\pi}\right)\right) - L_{p, 2} \mathfrak{M}^{-1}\big( (1+\pi) \vp(a)\big).\]
   For all sufficiently small $s$, we have $v_s(L_{p, i}^{\eta}) = \mu_i^{\eta} + s \lambda_i^\eta$. Hence, using the bounds of Corollary \ref{corr:bounds-a}, the hypotheses of case (a) force the second term to dominate, giving the stated formula for $v_s(L_{p, \beta}^\eta)$. Similarly in case (b), the first term must dominate for all sufficiently small $s$, giving the formula stated.
  \end{proof}

  This gives a (conditional) explanation of the phenomena observed in the examples of \cite[\S 9]{pollackstevens08} for the critical slope 3-adic and 5-adic $L$-functions of quadratic twists of the elliptic curves $X_0(11)$ and $X_0(14)$, with $\eta$ the trivial character of $\Delta$. In all of these cases the residual representations are locally non-split at $p$.

  The 3-adic representation of $X_0(11)$ (and hence of any of its twists) is surjective\footnote{This follows from the fact that its mod $3$ representation is surjective, as can be explicitly seen by calculating the Galois group of the 3-torsion field; and its $j$-invariant does not lie in the image of the rational function $f(x)$ of \cite{elkies06}. So its mod 9 representation is surjective, which implies that its 3-adic representation is also surjective.}. Hence the Kato zeta element for this curve is integral, and any twist such that $\mu_2^\eta = 0$ must satisfy the hypotheses of (a) above.

  In the case of 3-adic $L$-functions of twists of $X_0(14)$, the residual representation is globally reducible (but still locally non-split). Thus we cannot show that $L_{p, 1}$ is integral; but if we assume that this is the case, then again any twist of $X_0(14)$ with $\mu_2^\eta = 0$ satisfies the hypotheses of (a), which is consistent with the numerical results of \emph{op.cit.}.

  In the case of the 5-adic $L$-functions of twists of $X_0(11)$, let us also assume that $L_{p, 1}$ is integral. From this assumption, it follows that for twists by even quadratic characters, where $\mu_2^\eta = 0$, we obtain the pattern of zeros of part (a) of the theorem; but for twists by odd quadratic characters, where $\mu_2^\eta > 0$, we are in the situation of (b), unless $\mu_1^\eta$ is also positive.

\section*{Acknowledgements} This paper was written while the second author was visiting the University of Warwick; she thanks the number theory group for the hospitality.
\providecommand{\bysame}{\leavevmode\hbox to3em{\hrulefill}\thinspace}
\providecommand{\MR}[1]{\relax}
\renewcommand{\MR}[1]{%
 MR \href{http://www.ams.org/mathscinet-getitem?mr=#1}{#1}.
}
\providecommand{\href}[2]{#2}
\newcommand{\articlehref}[2]{\href{#1}{#2}}

\end{document}